\theoremstyle{definition}
\theoremstyle{plain}
\newcommand{\A}{\mathcal{A}}
\newcommand{\tx}{\otimes }
\newcommand{\ts}{\oplus}
\newcommand{\ri}{\rightarrow }
\newcommand{\Lh}{\frak{L}}
\newcommand{\Rh}{\frak{R}}
\newcommand{\Fm}{\widetilde{F}}
\newcommand{\Lo}{\hat{L}}
\newcommand{\Ro}{\hat{R}}
\newcommand{\Gm}{\widetilde{G}}
\newcommand{\Hm}{\widetilde{H}}
\newcommand{\Km}{\widetilde{K}}
\newcommand{\Fa}{\Breve{F}}
\newcommand{\Ka}{\Breve{K}}
\newcommand{\La}{\Breve{L}}
\newcommand{\Ra}{\Breve{R}}
\newcommand{\Ga}{\Breve{G}}
\newcommand{\Ha}{\Breve{H}}
\newtheorem{thm}{\bf Theorem}[section]
\newtheorem{lem}[thm]{\bf Lemma} 
\newtheorem{pro}[thm]{\bf Proposition}
\newtheorem{hq}[thm]{\bf Corollay}
\newtheorem{Note}[thm]{\bf Remark} 
\newtheorem{dn}[thm]{Definition}
\begin{document}

\title{ \Large{\bf Cohomological classification of braided $Ann$-categories}}

\pagestyle{myheadings} 

\markboth{Nguyen Tien Quang-Dang Dinh Hanh}{Cohomological classification of braided $Ann$-categories}

\maketitle

\centerline{\MakeUppercase{Nguyen Tien Quang} and \MakeUppercase{Dang Dinh Hanh}}

\vspace{0.3cm}

\centerline{\it Dept. of Mathematics, Hanoi National University of Education,}

\vspace{0.1cm}

\centerline{\it 136 Xuanthuy, Caugiay, Hanoi, Vietnam }

\vspace{0.3cm}

{\centerline{\it Email: nguyenquang272002@gmail.com,\ \  \ \ ddhanhdhsphn@gmail.com }}

\vspace{0.3cm}

\setcounter{tocdepth}{1}

\begin{abstract}  
A braided $Ann$-category $\mathcal A$ is an $Ann$-category $\mathcal A$ together with a braiding $c$ such that $(\mathcal A, \otimes, a, c, (1,l,r))$ is a braided tensor category, moreover $c$ is compatible with the distributivity constraints. According to the structure transport theorem, the paper shows that each braided $Ann$-category is equivalent to a braided $Ann$-category of the type $(R,M)$, hence the proof of the classification theorem for braided $Ann$-categories by the cohomology of commutative rings is presented.
\end{abstract}

\ \ \ \parbox[t]{11cm}{\small{\bf Mathematics Subject Classifications (2000):} 18D10, 16E40}

\vspace{0.3cm}

\ \ \ \parbox[t]{11cm}{{\small{\bf Keywords:} Braided $Ann$-category, structure transport, classification theorem,  cohomology of rings.}}

\section{Introduction}
The notion of monoidal categories or tensor categories was presented by S. Mac Lane \cite{Mac3}, J. B\'{e}nabou \cite{Be} in 1963. Then, these categories were ``refined" to become  {\it categories with group structure}, when the notion of  invertible objects  was added (see M. L. Laplaza \cite{Lap2}, S. Rivano \cite{Ri}). Now, if the underlying category is a {\it groupoid} (i.e., all morphisms are isomorphisms), we obtain the concept of a {\it monoidal category group-like} (see A. Fr\"ohlich and C. T. C. Wall \cite{FroWa}), or a Gr-category (see H. X. Sinh \cite{Si}), or recently called {\it categorical group}. The variety of categorical groups has been classified by the cohomology group   $H^{3}(G, A)$ (see \cite{Si}).

The concept of a braided tensor category was introduced by A. Joyal and R. Street \cite{JS} which is a necessary extension of a symmetric  tensor category, since the center of a tensor category is  a braided tensor category but unsymmetric. In \cite{JS}, these authors have classified the variety of braided categorical groups by the category of quadratic functions (thanks to a result of S. Eilenberg and S. Mac Lane on representations of quadratic functions by the abelian cohomology group $H^{3}_{ab}(G, A)$ \cite{EM}, \cite{Mac1}). Before that, the case of symmetric categorical groups (or Picard categories) was solved by H. X. Sinh \cite{Si}.

A more general situation for  Picard categories was given by A. Fr\"ohlich and C. T. C. Wall
with the name {\it graded categorical groups} \cite{FroWa} (which is later called graded Pic-categories by A. Cegarra and E. Khmaladze \cite{CK1}). Homotopical classification  theorems for the variety of graded categorical groups, the variety of braided graded categorical groups, and its particular case, the variety of graded Picard categories have been presented, respectively, in \cite{Gar-Del}, \cite{CK1}, \cite{CK2}. Each category raises to a 3-cocycle in some sense that each congruence class of the same categories is corresponding to a third-dimensional cohomology class.

The categories with two monoidal structures interest many authors. In 1972, M. L. Laplaza \cite{Lap} studied   distributivity categories. The main result of \cite{Lap} is a proof of coherence theorem for these categories. Later, in \cite{FroWa}, A. Fr\"{o}hlich and C. T. C Wall presented  the concept of {ring-like categories} with intention is to offer a new axiomatics which is shorter than the one of M. L. Laplaza  \cite{Lap}.  These two concepts  are formalizations of the category of the modules over a commutative ring. 

In 1994, M. Kapranov and V. Voevodsky \cite{KapVo} omitted requirements of the axiomatics of Laplaza which are related to the commutativity constraints of the operation $\otimes$ and presented the name {\it ring categories} to indicate these categories.

In order to obtain descriptions on structures as well as cohomological classification, N. T. Quang introduced  the concept of $Ann$-categories, 
as a categorification of the concept of rings,  with requirements of invertibility of objects and morphisms of the underlying category, similar to the case of categorical groups (see  \cite{Gar-Del}). These additional requirements are not too special, because if $\mathcal P$ is a Picard category, then the category End($\mathcal P$) of Pic-functors over $\mathcal P$ is an $Ann$-category (see \cite{Q11}), this was repeated in \cite{HCCZ1}. Besides, each $Ann$-category is a ring category \cite{QTP}. Moreover, we can prove that each congruence class of $Ann$-categories is completely defined by three invariants: the ring $R$, the $R$-bimodule $M$ and an element in the  Mac Lane cohomology group  $H^{3}_{MacL}(R, M)$ (see \cite{Q3}). Regular $Ann$-categories (whose commutativity constraint satisfies $c_{X,X}= id$ for any object $X$) were classified by the Shukla cohomology group $H^{3}_{Sh}(R, M)$ (see \cite{Q12}). $Ann$-functors were also classified by the low-dimensional Mac Lane cohomology groups \cite{Q10}.

In 2006, M. Jibladze and T. Pirashvili \cite{JiPi} presented the concept of  {\it categorical ring}  as a slightly modified version of the concept of $Ann$-categories. Hence, in \cite{JiPi}, they classified categorical rings by the third cohomology group of ring due to Mac Lane. In \cite{QHT}, we proved that the variety of categorical rings contained the variety of $Ann$-categories. Conversely, from the axiomatics of the categorical rings may not deduce the structure of $\pi_0\A$-bimodule of $\pi_1\A$. Each $Ann$-category can be regarded as an one-object of Gpd-categories in the thesis of M. Dupont \cite{Dup}, or an one-point enrichment of SPC of V. Schmitt \cite{Sch}.

The study of the braiding in an $Ann$-category is a natural process. In \cite{Q2}, it is proved that the two concepts of ``distributivity category'' of M. L. Laplaza  and ``ring-like category'' of A. Fr\"ohlich and C. T. C Wall coincide and this concept is broader than the one of symmetric $Ann$-categories.

The purpose of this paper is to classify the braided $Ann$-categories and to expand the well-known results of $Ann$-categories. First, we showed the equivalence of each braided $Ann$-category with a braided $Ann$-category of the type $(R, M)$. Moreover, 
this equivalence induces  a functor
$${\bf BrAnn}\rightarrow {\bf H^{3}_{BrAnn}} $$
between the category of braided $Ann$-categories and the  category of  3-cohomology classes of commutative rings. 

 For any commutative ring $R$ and $R$-module $M,$ we define the cohomology groups $H^{n}_{ab}(R,M),$ where $n=1,2,3$ and prove that there exists a bijection 
$$ {\bf BrAnn}[R,M]\leftrightarrow  H^{3}_{ab}(R,M),$$
 where $ {\bf BrAnn}[R,M]$ is the set of classes of $Ann$-categories of the type $(R,M).$


In particular, this cohomology class is just an element of the Harrison cohomology group $H^{3}_{Har}(R, M).$

In this paper, we sometimes denote  $XY$ instead of  $X\tx Y$ for two objects  $X, Y$.


\section{Cohomology for rings}
\subsection{Mac Lane cohomology groups for rings}

Let $R$ be a ring and $M$ be an $R$-bimodule. From the definition of ring cohomology of S. Mac Lane \cite{Mac2}, we may obtain the description of elements of the cohomology group $H^3_{MacL}(R,M)$ (see Proposition 7.2, Proposition 7.3 \cite{Q3}).

The group $Z^3_{MacL}(R,M)$ of 3-cocycles of the ring $R$ with coefficients in the $R$-bimodule $M$ consists of 5-tuple $(\xi,\eta,\alpha,\lambda,\rho)$ of the maps
$$\xi,\alpha,\lambda,\rho:R^3\rightarrow M,\ \ \eta:R^2\rightarrow M$$ 
which satisfy the following conditions for all $x,y,z,t\in R$:
\begin{eqnarray*}
S1.&
\xi(y,z,t) - \xi(x+y,z,t) +\xi(x,y+z,t) - \xi(x,y,z+t) + \xi(x,y,z) = 0,\\
S2.&\xi(x,y,z) - \xi(x,z,y) +\xi(z,x,y) + \eta(x+y,z) - \eta(x,z) - \eta(y,z) = 0,\\
S3.&\eta(x,y) + \eta(y,x) = 0,\\
S4.&x\eta(y,z) - \eta(xy,xz) = \lambda(x,y,z) - \lambda(x,z,y),\\
S5.&\eta(x,y)z - \eta(xz,yz) =  \rho(x,y,z) - \rho(y,x,z),\\
S6.&x\xi(y,z,t) - \xi(xy,xz,xt) =\lambda(x,z,t) - \lambda(x,y+z,t) + \lambda(x,y,z+t)\\
& - \lambda(x,y,z),
\end{eqnarray*}
\begin{eqnarray*}
S7.& \xi(x,y,z)t - \xi(xt,yt,zt) = \rho(y,z,t) - \rho(x+y,z,t)+\rho(x,y+z,t)\\
&-\rho(x,y,z),\\
S8.&\rho(x,y,z+t) - \rho(x,y,z) -\rho(x,y,t) +\lambda(x,z,t)+\lambda(y,z,t)\\
& - \lambda(x+y,z,t) =
\xi(xz+xt,yz,yt)+\xi(xz,xt,yz)-\eta(xt,yz)\\
& +\xi(xz+yz,xt,yt)-\xi(xz,yz,xt),\\
S9.&\alpha(x,y,z+t)-\alpha(x,y,z)-\alpha(x,y,t) = x\lambda(y,z,t) + \lambda(x,yz,yt)\\
&-\lambda(xy,z,t),\\
S10.&\alpha(x,y+z,t)-\alpha(x,y,t)-\alpha(x,z,t) = x\rho(y,z,t) - \rho(xy,xz,t)+\\
&\lambda(x,yt,zt) -\lambda(x,y,z)t,\\
S11.&\alpha(x+y,z,t) - \alpha(x,y,t) -\alpha(y,z,t) = -\rho(x,y,z)t - \rho(xz,yz,t)\\
& + \rho(x,y,zt),\\
S12.&x\alpha(y,z,t) - \alpha(xy,z,t) + \alpha(x,yz,t) - \alpha(x,y,zt) + \alpha(x,y,z)t = 0,
\end{eqnarray*}
\noindent and satisfy the normalized conditions:
\begin{eqnarray*}
\xi(0,y,z)&=&\xi(x,0,z) = \xi(x,y,0) = 0,\\
\alpha(1,y,z)&=&\alpha(x,1,z) = \alpha(x,y,1) = 0,\\
\alpha(0,y,z)&=&\alpha(x,0,z) = \alpha(x,y,0) = 0,\\
\lambda(1,y,z)&=&\lambda(0,y,z) = \lambda(x,0,z) = \lambda(x,y,0) = 0,\\
\rho(x,y,1)&=&\rho(0,y,z) = \rho(x,0,z) = \rho(x,y,0) = 0.
\end{eqnarray*} 

\noindent The subgroup  $B^3_{MacL}(R,M)\subset Z^3_{MacL}(R,M)$ of 3-coboundaries consists of the 5-tuple of maps $h=(\xi,\eta,\alpha,\lambda,\rho)$ such that there exist the maps $g=(\mu,\nu):R^2\rightarrow M$ satisfying \ $h=\partial_{MacL} g$, i.e.,
\[\begin{array}{llllllllllllll}
S13.& \xi(x,y,z) & = & \mu(y,z)- \mu(x+y,z) + \mu(x,y+z) - \mu(x,y),\\
S14.& \eta(y,x) & = & \mu(x,y) - \mu(y,x),\\
S15.& \alpha(x,y,z) & = & x\nu(y,z) - \nu(xy,z) + \nu(x,yz) - \nu(x,y)z,\\
S16.& \lambda(x,y,z) & = &\nu(x,y+z) - \nu(x,y) - \nu(x,z) + x\mu(y,z) -\mu(xy,xz),\\
S17.& \rho(x,y,z) & = &\nu(x+y,z) - \nu(x,z) - \nu(y,z) + \mu(x,y)z - \mu(xz,yz),
\end{array}\]

\noindent where $\mu,\nu$ satisfy the ``normalized'' condition $\mu(0,y) = \mu(x,0) = 0$ and\linebreak $\nu(0,y) = \nu(x,0) = \nu(1,y) = \nu(x,1) = 0.$

The group $Z^2_{MacL}(R,M)$ of 2-cocycles $g=(\mu,\nu),$ of the ring $R$ with coefficients in the $R$-bimodule $M,$ satisfying:

\[\partial_{MacL} g=0.\]

\noindent The subgroup  $B^2_{MacL}(R,M)\subset Z^2_{MacL}(R,M)$ of 2-coboundaries are the pairs $(\mu,\nu)$  such that there exists a map $t:R\rightarrow M$ satisfying $(\mu,\nu)=\partial_{MacL}t$, i.e.,
\vspace{0.2cm}

\noindent $S18.\ \ \mu(x,y) =  t(y)-t(x+y)+t(x),$

\vspace{0.15cm}

\noindent $S19.\  \ \nu(x,y)  =  xt(y) - t(xy)+t(x)y,$

\vspace{0.2cm}
\noindent where $t$ satisfies the normalized condition $t(0)=t(1)=0$. 

The group $Z^1_{MacL}(R,M)$   consists of 1-cochains $t,$ of the ring $R$ with coefficients in the $R$-bimodule $M,$ satisfying:
\[\partial_{MacL} t=0.\]

\noindent The subgroup $B^1_{MacL}(R,M)\subset Z^1_{MacL}(R,M)$ consists of 1-coboundaries which are maps $t$ such that there exists $a\in R$ satisfying $t(x)=ax-xa$.

The quotient groups
\[H^i_{MacL}(R,M)=Z^i_{MacL}(R,M)/ B^i_{MacL}(R,M),\ i=1,2,3 \]
are called the {\it $i$-th Mac Lane cohomology group}  of the ring $R$ with coefficients in the $R$-bimodule $M$.


\subsection{The low-dimensional cohomology groups of a comutative ring}

When the ring $R$ is commutative and $M$ is an $R$-module, we define the cohomology groups $H^n_{ab}(R,M)\ (n=1, 2, 3)$  as follows, note that $M$ is regarded as an $R$-bimodule with the two-sided actions  of the ring $R$ on $M$ coincide.

The group of abelian 3-cocycles $Z^3_{ab}(R,M)$ consists of pairs  $(h, \beta)$, where $h\in Z^3_{MacL}(R,M)$ and the map $\beta: R^2\rightarrow M$ satisfies the following conditions:
\begin{eqnarray*}
S20. &\alpha(x,y,z)-\alpha(x,z,y)+\alpha(z,x,y)+x\beta(y,z)-\beta(xy,z)+y\beta(x,z)=0,\\
S21. &\alpha(x,y,z)-\alpha(y,x,z)+\alpha(y,z,x)-y\beta(x,z)+\beta(x,yz)-z\beta(x,y)=0,\\
S22. &\beta(x,y)-\beta(x,y+z)+\beta(x,z)=\rho(y,z,x)-\lambda(x,y,z).
\end{eqnarray*}

The group of abelian 3-coboundaries $B^3_{ab}(R,M),$ which is a subgroup of $Z^3_{ab}(R,M),$ consists of the pairs  $(h, \beta)$, such that there exists a pair $(\mu, \nu):R^2\rightarrow M$ satisfying the normalized conditions and the relations 
$h=\partial_{MacL}(\mu,\nu)\in B^3_{MacL}(R,M)$, and $\beta(x,y)=\nu(x,y)-\nu(y,x)$. Then, we write  $(h, \beta)=\partial(\mu,\nu).$

The group $Z^2_{ab}(R,M)$ consists of the pairs  $(\mu, \nu):R^2\rightarrow M$, such that\linebreak $(\mu, \nu)\in Z^2_{MacL}(R,M)$, moreover $\nu$ satisfies the relation
\[\nu(x,y)-\nu(y,x)=0.\]

The subgroup 
$B^2_{ab}(R,M)$ of $Z^2_{ab}(R,M)$ consists of the pairs $(\mu, \nu)$ such that $(\mu, \nu)\in B^2_{MacL}(R,M)$.

The quotient groups 
\[H^i_{ab}(R,M)=Z^i_{ab}(R,M)/B^i_{ab}(R,M),\ i=2,3 \]
are called the {\it $i$-th commutative cohomology groups}  of the commutative ring $R$ with coefficients in the $R$-module $M$.
 
Finally, we define
\[H^1_{ab}(R,M)=H^1_{MacL}(R,M).\]


\section { Braided $Ann$-categories}

\subsection{Definitions and Examples}

First, let us recall the notion of a {\it braided monoidal category} according to \cite{JS}.

{\it A \emph{braiding} for a monoidal category $\mathcal V$ consists of a natural family of isomorphisms \[c=c_{A,B}: A\otimes B \stackrel{\sim}{\longrightarrow} B\otimes A\]
in $\mathcal V$ such that the two diagrams} (B1) {\it and} (B2) {\it commute:}

\[\scriptsize\begin{diagram}
\node{(A\otimes B)\otimes C}\arrow{e,t}{c\tx id}\arrow{s,l}{a^{-1}}
\node{(B\otimes A)\otimes C}\arrow{e,t}{a^{-1}}
\node{B\otimes (A\otimes C)}\arrow{s,r}{id\tx c}\\
\node{A\otimes (B\otimes C)}\arrow{e,t}{c}
\node{(B\otimes C)\otimes A}\arrow{e,t}{a^{-1}}
\node{B\otimes (C\otimes A)}\tag{B1}
\end{diagram}\]
\[\scriptsize\begin{diagram}
\node{A\otimes (B\otimes C)}\arrow{e,t}{id\tx c}\arrow{s,l}{a}
\node{A\otimes (C\otimes B)}\arrow{e,t}{a}
\node{(A\otimes C)\otimes B}\arrow{s,r}{c\tx id}\\
\node{(A\otimes B)\otimes C}\arrow{e,t}{c}
\node{C\otimes (A\otimes B)}\arrow{e,t}{a}
\node{(C\otimes A)\otimes B}\tag{B2}
\end{diagram}\]

If $c$ is a braiding,  so is $c'$ given by  $c'_{A,B}=(c_{B,A})^{-1}$,  since (B2) is just obtained from (B1) by replacing $c$ with $c'$. A {\it symmetry} is a braiding $c$ which satisfies  $c'=c$. 

{\it A \emph{braided monoidal category} is a pair $(\mathcal V, c)$ consisting of a monoidal category $\mathcal V$ and a braiding $c$.}

\begin{dn}[{\cite{Q11}}]
An \emph{$Ann$-category} consists of:
\begin{enumerate}
\item[(i)] A category $\mathcal A$ together with two bifunctors $\oplus,\otimes:\mathcal A \times \mathcal A\longrightarrow \mathcal A.$
\item[(ii)] A fixed object $0\in \mathcal A$ together with natural constraints $a^+,c^+,g,d$ such that $(\mathcal A,\oplus,a^+,c^+,(0,g,d))$ is a symmetric categorical group.
\item[(iii)] A fixed object $1\in\mathcal A$ together with natural constraints $a,l,r$ such that $(\mathcal A,\otimes,a,(1,l,r))$ is a monoidal category.
\item[(iv)] Natural isomorphisms $\frak{L},\frak{R}$
\begin{equation*}
\begin{array}{cccc}
\frak{L}_{A, X, Y} : &A\otimes (X\oplus Y) &\rightarrow& (A\otimes X)\oplus (A\otimes Y),\\
\frak{R}_{X, Y, A}:&(X\oplus Y)\otimes A &\rightarrow &(X\otimes A)\oplus (Y\otimes A),
\end{array}
\end{equation*}
such that the following conditions are satisfied:
\end{enumerate}

(Ann-1) For each object $A\in \mathcal A,$ the pairs $(L^A,\breve{L^A}),(R^A,\breve{R^A})$ defined by the relations:
\begin{equation*}
\begin{array}{cccc}
L^A = A \otimes -,&\qquad\qquad&R^A = - \otimes A,\\
\Breve{L}_{X, Y}^A = \frak{L}_{A, X, Y},&\qquad\qquad&\Breve{R}_{X, Y}^A = \frak{R}_{X, Y, A}
\end{array}
\end{equation*}
 are $\oplus$-functors which are compatible with $a^+$ and $c^+$.

(Ann-2) For all objects $ A,B,X,Y\in \mathcal A,$ the following diagrams:
\[\scriptsize\begin{diagram}
\node{(AB)(X\oplus Y)}\arrow{s,l}{\Breve{L}^{AB}}
\node{A(B(X\oplus Y))}\arrow{e,t}{id_A \otimes \Breve{L}^B}\arrow{w,t}{\ a_{A, B, X\oplus Y}}
\node{A(BX\oplus BY)}\arrow{s,r}{\Breve{L}^A}\\
\node{(AB)X\oplus (AB)Y}
\node[2]{A(BX)\oplus A(BY)}\arrow[2]{w,t}{a_{A, B, X}\oplus a_{A, B, Y}}
\tag{1}\label{bd1}
\end{diagram}\]
\[\scriptsize\begin{diagram}
\node{(X\oplus Y)(BA)}\arrow{s,l}{\Breve{R}^{BA}}\arrow{e,t}{a_{X\oplus Y, B, A}}
\node{((X\oplus Y)B)A}\arrow{e,t}{\Breve{R}^B \otimes id_A}
\node{(XB\oplus YB)A}\arrow{s,r}{\Breve{R}^A}\\
\node{X(BA)\oplus Y(BA)}\arrow[2]{e,t}{a_{X, B, A}\oplus a_{Y, B, A}}
\node[2]{(XB)A\oplus (YB)A}
\tag{2}\label{bd2}
\end{diagram}\]
\[\scriptsize\begin{diagram}
\node{(A(X\oplus Y))B}\arrow{s,l}{\Breve{L}^A \otimes id_B}
\node{A((X\oplus Y)B)}\arrow{w,t}{a_{A, X\oplus Y, B}}\arrow{e,t}{id_A \otimes \Breve{R}^B}
\node{A(XB\oplus YB)}\arrow{s,r}{\Breve{L}^A}\\
\node{(AX\oplus AY)B}\arrow{e,t}{\Breve{R}^B}
\node{(AX)B\oplus (AY)B}
\node{A(XB)\oplus A(YB)}\arrow{w,t}{a\ \oplus\ a}
\tag{3}\label{bd3}
\end{diagram}\]

\[\scriptsize
\begin{diagram}
\node{(A\oplus B)X\oplus (A\oplus B)Y}\arrow{s,l}{\Breve{R}^X \oplus \Breve{R}^Y}
\node{(A\oplus B)(X\oplus Y)}\arrow{w,t}{\Breve{L}^{A\oplus B}}\arrow{e,t}{\Breve{R}^{X\oplus Y}}
\node{A(X\oplus Y)\oplus B(X\oplus Y)}\arrow{s,r}{\Breve{L}^A \oplus \Breve{L}^B}\\
\node{(AX\oplus BX)\oplus (AY\oplus BY)}\arrow[2]{e,t}{v}
\node[2]{(AX\oplus AY)\oplus (BX\oplus BY)}
\tag{4}\label{bd4}
\end{diagram}\]
commute, where $v = v_{_{U, V, Z, T}}: (U\oplus V)\oplus (Z\oplus T) \rightarrow (U\oplus Z)\oplus (V\oplus T)$ \ 
is the unique morphism built from $a^+,c^+,id$ in the symmetric categorical group $(\mathcal A,\oplus).$

(Ann-3) For the unit object $1\in \mathcal A$ of the operation $\otimes,$ the following diagrams:
\[\scriptsize\begin{diagram}
\node{1(X\oplus Y)}\arrow[2]{e,t}{\Breve{L}^1}\arrow{se,b}{l_{X\oplus Y}}
\node[2]{1X \oplus 1Y}\arrow{sw,b}{l_X\oplus l_Y}
\node{(X\oplus Y)1}\arrow[2]{e,t}{\Breve{R}^1}\arrow{se,b}{r_{_{X\oplus Y}}}
\node[2]{X1\oplus Y1}\arrow{sw,b}{r_{_X}\oplus r_{_Y}}\\
\node[2]{X\oplus Y}
\node{(5.1)}
\node[2]{X\oplus Y}
\node{(5.2)}
\end{diagram}
\]
commute.
\end{dn}

 \begin{dn} 
 A \emph{braided $Ann$-category}  $\mathcal A$ is an $Ann$-category $\mathcal A$ together with a  braiding $c$ such that $(\mathcal A, \otimes, a, c, (1,l,r))$ is a braided monoidal category, and $c$ makes the following diagram 
 \[\scriptsize\begin{diagram}\label{bd5}
\node{A(X\ts Y)}\arrow{e,t}{\La^A_{X,Y} }\arrow{s,l}{c}
\node{AX\ts AY}\arrow{s,r}{c\ts c}\\
\node{(X\oplus Y)A}\arrow{e,t}{\Ra^A_{X,Y}}
\node{XA\ts YA}
\tag{6}
\end{diagram}\]
commutes and satisfies $c_{_{0, 0}}=id$.

A braided $Ann$-category  is called a \emph{symmetric $Ann$-category} if the braiding $c$ is  symmetric. 
\end{dn}

\noindent {\bf Example 1.} {\bf A braided $Ann$-category of the type $(R,M)$}

Let $R$ be a commutative ring with the unit and $M$ be an $R$-module. For any  normalized 3-cocycle
$h\in Z^{3}_{MacL}(R,M),$ there is an $Ann$-category ${\mathcal S}=(R,M)$ defined as follows: objects of $\mathcal S$ are the elements of $R$, morphisms are automorphisms  $(x,a):x\rightarrow x$.
The composition law of morphisms in ${\mathcal S}$ is the addition operation in  $M$. The two operations $\oplus$ and $\otimes$ of ${\mathcal S}$ are defined by:
\begin{eqnarray*}
x\oplus y=x+y, & (x,a)\oplus (y,b)=(x+y,a+b);\\
x\otimes y=xy,& (x,a)\otimes (y,b)=(xy, xb+ya).
\end{eqnarray*}

The unit constraints for the two operations are strict, the associativity, commutativity constraints of $\oplus$, the associativity constraint of $\otimes$ and distributivity constraints are the maps, respectively, of 3-cocycle $h=(\xi,\eta,\alpha,\lambda,\rho)$. Then $\mathcal S$ is a braided $Ann$-category with the braiding is the map $\beta:R^2\rightarrow M$ satisfies the equations  (S20), (S21) and (S22).

If $\mathcal S$ is a symmetric $Ann$-category, then $\beta$ satisfies the equations (S20), (S22) and the following equation (S21b):\\
$S21b. \ \beta(x,y)+\beta(y,x)=0.$
\vspace{0.3cm}

\noindent {\bf Example 2.} {\bf The center of an almost strict $Ann$-category}

Let $\mathcal A$ be an {\it almost strict $Ann$-category}.
{\it The center of an $Ann$-category} $\mathcal A$, denoted by  $C_{\mathcal A}$, is a category in which objects are pairs $(A,u)$, with $A\in Ob(\mathcal A)$ and
\begin{equation*}
u_{A,X}: A\otimes X \longrightarrow X\otimes A
\end{equation*} 
is a natural transformation satisfying the following three conditions:
\begin{eqnarray*}
\qquad \ \quad\ \  u_{1,X}&=&id_X,\\
\qquad  \ u_{A,X\tx Y}&=&(id_X\tx u_{A,Y})\circ (u_{A,X}\tx id_Y),\\
\qquad  \ u_{A,X\ts Y}&=&(u_{A,X}\ts u_{A,Y})\circ \La_{A,X,Y}.
\end{eqnarray*}

\noindent The morphism $f: (A,u_A)\rightarrow (B,u_B)$ of $C_{\mathcal A}$ is a morphism $f: A\rightarrow B$ of $\mathcal A$ satisfying the condition
 $$\ \  u_{B,X}\circ(f\otimes id)=(id\otimes f)\circ u_{A,Y}.$$ 

Then $C_{\A}$ becomes a braided $Ann$-category with the sum of two objects given by \begin{equation*}
 (A,u_A)+(B,u_B)=(A\ts B, \La_{-,A,B}^{-1}\circ u_{A\ts B});
\end{equation*}
and the sum of two morphisms of $C_{\A}$ is indeed the sum of two morphisms of $\A$; the product of two objects of  $C_{\A}$ given by:
\begin{equation*}
(A,u_A)\times (B,u_B)=(A\otimes B, (u_A\tx id)\tx (id\tx  u_B));
\end{equation*}
the tensor product of two morphisms of  $C_{\A}$ is indeed the tensor product of two morphisms of $\A$, and the braiding $c$ given by:
\[c_{(A,u_A), (B,u_B)}=u_{A,B}: (A,u_A)\tx (B,u_B)\ri (B,u_B)\tx (A,u_A).\]

The detail proof for this example was presented in \cite{Q2}.

\vspace{0.3cm}

\noindent {\bf Example 3.} {\bf A symmetric $Ann$-category of a regular homomorphism}

First, let us recall some events of the ring  extension problem which was presented by 
S. Mac Lane \cite{Mac2} and the $Ann$-category of a regular homomorphism was presented in \cite{Q12}. 

For a given ring $A$, we call $M_A, P_A$, respectively, the ring of bimultiplications, the ring of outer-bimultiplications of the ring $A$, where $P_A=M_A/\tau(A)$, with $\tau:A\rightarrow M_A$ is a ring homomorphism defined by:
\[\tau_ca=ca,\ \ a\tau_c=ac,\ \ \ a, c\in A.\]  
For any regular homomorphism $\theta: R\rightarrow P_A$, {\it the bicenter} of the ring $A$,
$$C_A=\{c\in A\ |\ ac=ca=0,\ \forall a\in A\},$$
is an $R$-bimodule with the actions
\[xc=(\theta x)c,\ \ cx=c(\theta x),\ \ c\in C_A, x\in R.\]

The ring extension problem requires one to find a short exact sequence of rings 
\[0\longrightarrow A\longrightarrow S\longrightarrow R\longrightarrow 0\]
which induces the given homomorphism $\theta: R\rightarrow P_A$. In \cite{Q12}, it is showed that  $\theta$ induces an $Ann$-category $(R, C_A, h),$ it can be shortly described as follows.

Let $\sigma: R\rightarrow M_A$ be a map such that $\sigma(x)\in \theta x,\ x\in R$ and $\sigma(0)=0,$ $\sigma(1)=1$. Then $\sigma$ induces two maps 
$$f, g: R\times R\rightarrow A$$
satisfying
\begin{eqnarray*}
\tau_{ f(x,y)}&=& \sigma(x+y)-\sigma(x)-\sigma(y),\\
\tau_{ g(x,y)}&=& \sigma(xy)-\sigma(x)\sigma(y).
\end{eqnarray*}

The ring structure of $M_A$ induces a 3-cocycle $h\in Z^{3}_{MacL}(R, C_A)$, where $h$ is a family of maps $\xi, \alpha, \lambda, \rho: R^3\rightarrow C_A$ and $\eta: R^2\rightarrow C_A.$  And so, $(R, C_A, h)$ is an $Ann$-category (Proposition 6.1 \cite{Q12}).

Now, if the rings  $A, R$ are commutative, then there exists a map\linebreak
 $\beta:R^2\rightarrow C_A$ 
such that
\begin{equation*}
\tau_{\beta(x,y)}=\sigma(x)\sigma(y)-\sigma(y)\sigma(x),\ \ x,y\in R.
\end{equation*} 
It is easy to see that 
\[\beta(x,y)=g(x,y)-g(y,x).\]
Moreover, one can directly verify that  $\beta$ is a symmetric constraint of the $Ann$-category $(R, C_A, h)$, it means that $(R, C_A, h, \beta)$ is a symmetric $Ann$-category.

We also call the family $(\xi,\eta, \alpha, \beta, \lambda, \rho)$ {\it an obstruction} of the homomorphism $\theta$. When all these maps vanish, then there exists the commutative ring extension $S$ of $R$ by $A$ defined as follows:
\[S=\{(a,r)\ |\ a\in A, \ r\in R\}\] 
together with the operations:
\begin{eqnarray*}
(a_1,r_1)+(a_2,r_2)&=&(a_1+a_2+f(r_1,r_2), r_1+r_2),\\
(a_1,r_1).(a_2,r_2)&=&(r_1a_2+r_2a_1+r_1r_2+g(r_1,r_2), r_1r_2).
\end{eqnarray*}

In the next sections, we shall transport the results on $Ann$-categories to braided $Ann$-categories. First, we extend the technique of structure transport to construct the reduced braided $Ann$-category $s\mathcal A$ of the braided $Ann$-category $\mathcal A$. This category is equivalent to the original braided $Ann$-category, so we can carry out the classification of braided $Ann$-categories on their reductions. Diagrams will be used instead of transformations of equations to make it convenient for readers to follow detail proof.


\subsection{Braided $Ann$-functors and the structure transport}\label{sec3}

It is known that if $(F, \Fa):\A\rightarrow \A'$ is an $\oplus$-functor which is compatible with 
the associativity constraints of two categorical groups, then it is also compatible with the unit constraints, i.e., one can deduce the isomophism   $F_{+}:F(0)\rightarrow 0$ such that  $(F, \Fa, F_{+})$ is a monoidal functor. We also recall that,  the concept of symmetric monoidal functors is not different from the one of braided  monoidal functors. With these notes, we have the following definition.

\begin{dn} Let  $\A$, $\A'$ be braided $Ann$-categories. A \emph{braided $Ann$-functor} is an $Ann$-functor which is compatible with the braidings. In other words, a braided $Ann$-functor from 
$\A$ to $\A'$ is a quadruple $(F, \Fa, \Fm, F_{\ast})$, where $(F, \Fa),$ $(F,\Fm, F_{\ast})$ are symmetric monoidal functors, respectively, for $\oplus,$ $\otimes,$ and satisfies the two following commutative diagrams:
\[\scriptsize
\begin{diagram}\label{bd8.1}
\node{F(X(Y\oplus Z))}\arrow{s,l}{F(\Lh)}
\node{F(X)F(Y\oplus Z)}\arrow{w,t}{\Fm}
\node{F(X)(FY\oplus FZ)}\arrow{s,r}{\Lh'}\arrow{w,t}{id\otimes \Fa}\\
\node{F(XY\oplus XZ)}
\node{F(XY)\oplus F(XZ)}\arrow{w,t}{\Fa}
\node{FXFY\oplus FXFZ}\arrow{w,t}{\Fm \oplus \Fm}
\tag{7.1}
\end{diagram}\]
\[\scriptsize\begin{diagram}\label{bd8.2}
\node{F((X\ts Y) Z)}\arrow{s,l}{F(\Rh)}
\node{F(X\ts Y)FZ}\arrow{w,t}{\Fm}
\node{(FX\oplus FY)FZ}\arrow{s,r}{\Rh'}\arrow{w,t}{ \Fa\tx id}\\
\node{F(XZ\oplus YZ)}
\node{F(XZ)\oplus F(YZ)}\arrow{w,t}{\Fa}
\node{FXFZ\oplus FYFZ}\arrow{w,t}{\Fm \oplus \Fm}
\tag{7.2}
\end{diagram}\]

\indent A \emph{braided $Ann$-morphism} (or a \emph{homotopy}) 
$$u : (F, \Fa, \Fm, F_{\ast})\rightarrow (K, \Ka, \Km, K_{\ast})$$
between braided $Ann$-functors is an $\oplus$-morphism as well as  an $\otimes$-morphism.

If there exists a braided $Ann$-functor $(K,\Ka,\Km, K_{\ast}):\A'\rightarrow \A$ and  braided $Ann$-morphisms
$KF\stackrel{\sim}{\rightarrow} id_{\A}, FK \stackrel{\sim}{\rightarrow} id_{\A'}$, we say that $(F,\Fa,\Fm, F_{\ast})$ is a braided $Ann$-equivalence, and, $\A$ and $\A'$ are braided $Ann$-\emph{equivalent}.
\end{dn}

One can prove that each braided $Ann$-functor is a braided $Ann$-equivalence iff $F$ is a categorical equivalence.

For later use, we prove the lemma below.

\begin{lem}\label{lem31}
Every braided $Ann$-functor $F=(F, \Fa, \Fm, F_{\ast}): \A\rightarrow \A'$ is homotopic to a braided $Ann$-functor
$F'=(F', \Fa', \Fm', F_{\ast}'),$ where $F'1=1',$ and $F'_{\ast}=id_{1'}$.
\end{lem}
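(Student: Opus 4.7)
The plan is to modify $F$ only at the object $1_{\A}$, leaving its values on all other objects unchanged, and then to transport the constraint data along the resulting natural isomorphism. Concretely, I set $F'(X) = F(X)$ for every $X \neq 1_{\A}$, put $F'(1_{\A}) = 1_{\A'}$, and define a natural isomorphism $u : F \Rightarrow F'$ by $u_{1_{\A}} = F_{\ast} : F(1_{\A}) \to 1_{\A'}$ and $u_X = id_{F(X)}$ otherwise. This uniquely determines $F'$ on morphisms via $F'(f) := u_Y \circ F(f) \circ u_X^{-1}$ for $f : X \to Y$, which is routinely functorial.

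Next, I define the constraints of $F'$ by conjugation with $u$:
\[\Fa'_{X,Y} := u_{X \oplus Y} \circ \Fa_{X,Y} \circ (u_X \oplus u_Y)^{-1}, \quad \Fm'_{X,Y} := u_{X \otimes Y} \circ \Fm_{X,Y} \circ (u_X \otimes u_Y)^{-1},\]
and I declare $F_{\ast}' := id_{1_{\A'}}$. The latter choice is consistent with $u$ being an $\otimes$-morphism: the unit-compatibility axiom reads $F_{\ast}' \circ u_{1_{\A}} = F_{\ast}$, which with $F_{\ast}' = id$ becomes $u_{1_{\A}} = F_{\ast}$, exactly as I set it.

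It remains to verify that $F' = (F', \Fa', \Fm', F_{\ast}')$ is again a braided $Ann$-functor and that $u$ is a braided $Ann$-morphism. Both reduce to routine diagram chases: each axiom in the definition of braided $Ann$-functor (the $\oplus$-monoidal axioms, the $\otimes$-monoidal axioms, the two distributivity diagrams (7.1) and (7.2), and compatibility with the braiding $c$) is an equation between composites of constraint isomorphisms, and the $F'$-version of such a composite equals the $u$-conjugate of the corresponding $F$-composite, so the intermediate $u_?$-factors cancel pairwise along the boundary, leaving the $F$-axiom intact. Simultaneously, the defining formulas for $\Fa'$ and $\Fm'$ make the $\oplus$-morphism and $\otimes$-morphism axioms for $u$ tautological.

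The only genuine subtlety is that $X \oplus Y$ or $X \otimes Y$ may happen to equal $1_{\A}$ for some $X, Y \neq 1_{\A}$, in which case the corresponding $u_?$ is $F_{\ast}$ rather than the identity, so the formulas do not degenerate to $\Fa' = \Fa$ and $\Fm' = \Fm$ globally. This causes no real trouble, since the transport formulas were written uniformly in $u_?$ and handle both cases at once. Hence $F'$ enjoys all structural properties of $F$, and in addition $F'(1_{\A}) = 1_{\A'}$ with $F_{\ast}' = id_{1_{\A'}}$, as required.
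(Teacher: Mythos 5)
Your proposal is correct and follows essentially the same route as the paper: the paper also defines the homotopy by taking the identity at every object except $1_{\A}$, where it uses $F_{\ast}$ (up to the direction convention for the unit isomorphism), and then transports $F$, $\Fa$, $\Fm$ by conjugation so that $F'_{\ast}=id_{1'}$. Your extra remarks on the unit-compatibility of $u$ and on the case $X\oplus Y=1_{\A}$ or $X\otimes Y=1_{\A}$ are sound elaborations of what the paper leaves implicit.
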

\begin{proof}
Consider the family of isomorphisms in $\A'$:
$$\theta_X=\begin{cases}id_{FX} \ \text{if}\ X\not= 1,\\ F_{\ast}^{-1}\ \  \text{if}\ \  X=1,\end{cases}$$
with $X\in \A$. Then $F$ can be deformed to a new braided $Ann$-functor $F'$, in the unique way such that $\theta: F\rightarrow F'$ becomes a homotopy, where
\begin{eqnarray*}
F'X=\begin{cases} FX \ \text{if}\ X\not= 1,\\ 1' \ \text{if}\  X=1;\end{cases}
\ F'(f: X\rightarrow Y)=(\theta_Y F(f)(\theta_X)^{-1}: F'X\rightarrow F'Y);\\
\Fa'_{X,Y}=\theta_{X\oplus Y}\Fa_{X,Y}(\theta_X\oplus \theta_Y)^{-1}; \Fm'_{X,Y}=\theta_{XY}\Fm_{X,Y}(\theta_X\otimes\theta_Y)^{-1};
 F_{\ast}'=\theta_1 F_{\ast}=id_1.
\end{eqnarray*}
\end{proof}

Thanks to Lemma \ref{lem31}, we can denote a braided $Ann$-functor by $(F,\Fa,\Fm)$ when we need not mention to $F_{\ast}$. 

\begin{pro}
In the definition of a braided $Ann$-functor, the commutation of the diagram (\ref{bd8.2}) is deduced from the other conditions.
\end{pro}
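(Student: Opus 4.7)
The plan is to deduce diagram (\ref{bd8.2}) by exploiting the fact that diagram (6) in any braided $Ann$-category expresses $\Rh$ as a braiding-conjugate of $\Lh$. Concretely, taking $A=Z$ in (6) yields
$$\Rh^Z_{X,Y}=(c_{Z,X}\oplus c_{Z,Y})\circ \Lh^Z_{X,Y}\circ c_{Z,X\oplus Y}^{-1}$$
in $\A$, and the analogous identity holds for $\Rh'^{FZ}_{FX,FY}$ in $\A'$. Substituting these decompositions on each side of (\ref{bd8.2}) reduces the claim to the commutativity of a single rectangle from $(FX\oplus FY)FZ$ to $F(XZ\oplus YZ)$, with the four outer edges being $(\Fa\otimes id)$, $\Fm\circ F(c^{-1})\circ F(\Lh)\circ F(c\oplus c)$, $\Fa$, and $c'^{-1}\circ \Lh'\circ(c'\oplus c')\circ(\Fm\oplus \Fm)$.

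I would then tile this rectangle as a vertical stack of three slabs, each commuting for an independent reason. The \textbf{top slab} transports the initial $(\Fa\otimes id)$ past a braiding; its commutativity follows from the braided monoidal functor axiom $F(c_{Z,X\oplus Y})\circ \Fm_{Z,X\oplus Y}=\Fm_{X\oplus Y,Z}\circ c'_{FZ,F(X\oplus Y)}$ combined with the naturality of $c'$ applied to the morphism $\Fa_{X,Y}\colon FX\oplus FY\to F(X\oplus Y)$. The \textbf{middle slab} is precisely diagram (\ref{bd8.1}) with the relabelling $(X,Y,Z)\mapsto (Z,X,Y)$, furnishing the compatibility of $F$ with $\Lh^Z_{X,Y}$. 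The \textbf{bottom slab} compares $F(c_{Z,X}\oplus c_{Z,Y})$ with $c'_{FZ,FX}\oplus c'_{FZ,FY}$: apply the braided monoidal functor axiom separately to $c_{Z,X}$ and to $c_{Z,Y}$, direct-sum them, and use naturality of $\Fa$ to push $F(c_{Z,X})\oplus F(c_{Z,Y})$ across $\Fa_{ZX,ZY}$.

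Pasting the three slabs vertically and then recontracting $(c\oplus c)\circ \Lh\circ c^{-1}=\Rh$ (respectively in $\A'$) at the outer boundary recovers exactly diagram (\ref{bd8.2}). The main obstacle is combinatorial bookkeeping: tracking which instance of $\Fm$ or $\Fa$ occupies which slot, and verifying that each invocation of naturality of $c'$ and of $\Fa$ is applied to the correct arrow. No further conceptual input beyond diagram (\ref{bd8.1}), diagram (6), the braided monoidal functor axiom for $\Fm$, and the naturalities of $c'$ and $\Fa$ is required. By Lemma \ref{lem31} one may in addition assume $F_{\ast}=id_{1'}$, which removes any unit-coherence nuisance from the picture.
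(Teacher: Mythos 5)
Your proposal is correct and is essentially the paper's own argument: both use diagram (6) in $\A$ and in $\A'$ to rewrite $\Rh$ (resp.\ $\Rh'$) as a braiding-conjugate of $\Lh$ (resp.\ $\Lh'$), reduce the claim to the compatibility diagram (\ref{bd8.1}) for $\Lh^Z$, and close the gaps with the braided tensor functor axiom for $\Fm$ and the naturalities of $c'$ and $\Fa$ --- your three slabs are exactly the regions (I)--(III) and (V)--(VII) of the paper's single large diagram. The only differences are cosmetic (slab-by-slab pasting versus one perimeter-to-center chase, plus an unneeded appeal to Lemma \ref{lem31}).
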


\begin{proof}
We consider the following diagram:
\[\scriptsize\setlength\unitlength{0.5cm}
\begin{picture}(20,11)
\put(0, 10){$F(ZX\ts ZY)$}
\put(0, 7){$F(XZ\ts YZ)$}
\put(0, 4){$F((X\ts Y)Z)$}
\put(0, 1){$F(Z(X\ts Y))$}

\put(6.5, 10){$F(ZX)\ts F(ZY)$}
\put(6.5, 7){$F(XZ)\ts F(YZ)$}
\put(6.5, 4){$F(X\ts Y)F(Z)$}
\put(6.5, 1){$F(Z) F(X\ts Y)$}

\put(14, 10){$F(Z)F(X)\ts F(Z)F(Y)$}
\put(14, 7){$F(X)F(Z)\ts F(Y)F(Z)$}
\put(14, 4){$(F(X)\ts F(Y))F(Z)$}
\put(14, 1){$F(Z)(F(X)\ts F(Y))$}

\put(2.5, 7.5){\vector(0, 1){2.1}}\put(0, 8.5){$F(c\ts c)$}
\put(2.5, 4.7){\vector(0, 1){2.1}}\put(2.7, 5.5){$F(\Rh)$}
\put(2.5, 3.7){\vector(0, -1){2.1}}\put(1.2, 2.5){$F(c)$}

\put(8.5, 7.7){\vector(0, 1){2.1}}\put(8.7, 8.5){$F(c)\ts F(c)$}
\put(8.5, 3.7){\vector(0, -1){2.1}}\put(7.9, 2.5){$c'$}

\put(16.5, 7.5){\vector(0, 1){2.1}}\put(17, 8.5){$c'\ts c'$}
\put(16.5, 4.6){\vector(0, 1){2.1}}\put(16.7, 5.5){$\Rh'$}
\put(16.5, 3.7){\vector(0, -1){2.1}}\put(16.7, 2.5){$c'$}

\put(6.1, 10.1){\vector(-1,0){2.5}}\put(4.75, 10.3){$\Fa$}
\put(13.7, 10.1){\vector(-1,0){2.9}}\put(11.53, 10.3){$\Fm\ts \Fm$}

\put(6.1, 7.1){\vector(-1,0){2.5}}\put(4.75, 7.3){$\Fa$}
\put(13.7, 7.1){\vector(-1,0){2.9}}\put(11.53, 7.3){$\Fm\ts \Fm$}

\put(6.1, 4.1){\vector(-1,0){2.5}}\put(4.75, 4.3){$\Fm$}
\put(13.7, 4.1){\vector(-1,0){2.9}}\put(11.53, 4.3){$\Fa\tx id$}

\put(6.1, 1.1){\vector(-1,0){2.5}}\put(4.75, 1.3){$\Fm$}
\put(13.7, 1.1){\vector(-1,0){2.9}}\put(11.53, 1.3){$id\tx \Fa$}

\put(-2.2,10.1){\vector(1,0){1.9}}
\put(-2.2,10.1){\line(0,-1){9}}
\put(-0.2,1.1){\line(-1,0){2}}\put(-2,5.5){$F(\Lh)$}

\put(21,10.1){\vector(-1,0){0.9}}
\put(21,10.1){\line(0,-1){9}}
\put(19.5,1.1){\line(1,0){1.5}}\put(21.2, 5.5){$\Lh'$}

\put(1, 5,5){(I)}
\put(5.4, 8.5){(II)}
\put(12.4, 8.5){(III)}
\put(9, 5.5){(IV)}
\put(5.4, 2.5){(V)}
\put(12.4, 2.5){(VI)}
\put(18, 5.5){(VII)}
\end{picture}\]

In the above diagram, the regions (I) and (VII) commute thanks to the diagram (\ref{bd5}), the region (II) commutes thanks to the naturality of $\Fa$, the region (III) commutes thanks to the naturality of $\Fm$, the region (V) commutes since $(F, \Fm)$ is a braided tensor functor, the region  (VI) commutes thanks to the naturality of $c'$, the perimeter commutes thanks to the diagram (\ref{bd8.1}). Hence, the region (IV) commutes, i.e., the region (\ref{bd8.2}) commutes.
\end{proof}

The structure transport for a monoidal category was presented by N. S. Rivano \cite{Ri}, H. X. Sinh  \cite{Si}. Then,   N. T. Quang \cite{Q3} extended naturally the structure transport for $Ann$-categories.  This transport can be extended to braided $Ann$-categories.

\begin{pro}\label{md21}
Let $(F,\Fa,\Fm): \A\ri \A'$ be an  $Ann$-equivalence  and $\A'$  is a braided $Ann$-category with a braiding $c'$. Then $\A$ becomes a braided $Ann$-category with a braiding $c$ given by the following commutative diagram

\[\scriptsize\begin{diagram}\label{bd9}
\node{F(X\tx Y)}\arrow{e,t}{F(c)}
\node{F(Y \tx X)}\\
\node{FX\tx FY}\arrow{e,t}{c'}\arrow{n,l}{\Fm}
\node{FY\tx FX}\arrow{n,r}{\Fm}
\tag{8}
\end{diagram}\]
and $(F,\Fa,\Fm)$ becomes a braided $Ann$-equivalence.
\end{pro}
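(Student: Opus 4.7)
The plan is to transport $c'$ along $F$ by using the fact that $F$, being an equivalence of underlying categories, is fully faithful. For each pair $(X,Y)$, the composite $\Fm_{Y,X}^{-1}\circ c'_{FX,FY}\circ \Fm_{X,Y}$ is an isomorphism $F(X\tx Y)\to F(Y\tx X)$ in $\A'$, so by fullness and faithfulness there exists a unique isomorphism $c_{X,Y}:X\tx Y\to Y\tx X$ in $\A$ with $F(c_{X,Y})$ equal to that composite; this is exactly the requirement that diagram (\ref{bd9}) commutes. Naturality of $c_{X,Y}$ in each argument follows from the naturality of $c'$ and of $\Fm$ together with the faithfulness of $F$.

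Next I would verify that $c$ is a braiding on the monoidal category $(\A,\otimes,a,(1,l,r))$, i.e., that the hexagons (B1) and (B2) commute in $\A$. For each hexagon, I apply $F$, then insert $\Fm$ at every vertex of the form $F(P\tx Q)\leftrightarrow FP\tx FQ$. The resulting enlarged diagram in $\A'$ tiles into: the corresponding hexagon for $c'$ (which commutes because $\A'$ is braided), three naturality squares for $\Fm$ and for $c'$, and two coherence squares expressing that $(F,\Fm)$ commutes with the associativity constraints (which hold because $(F,\Fm)$ is a monoidal functor). Each tile commutes in $\A'$, so the outer hexagon commutes in $\A'$, and faithfulness of $F$ transfers this commutation to $\A$. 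The same strategy handles the compatibility with distributivity: to check diagram (\ref{bd5}) in $\A$, apply $F$ and paste diagram (\ref{bd5}) for $c'$ in $\A'$ together with the two compatibility squares (\ref{bd8.1}) and (\ref{bd8.2}) (the latter being available by the Proposition that precedes the statement) and naturality squares for $\Fa$, $\Fm$, and $c'$; everything commutes in $\A'$, and faithfulness descends the commutativity to $\A$. The identity $c_{0,0}=id$ is obtained by applying $F$ and using $F_{+}:F(0)\cong 0'$, the normalization $c'_{0',0'}=id_{0'}$ in $\A'$, and naturality of $c'$.

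It remains to observe that $(F,\Fa,\Fm)$ becomes a braided $Ann$-equivalence. By construction of $c$, diagram (\ref{bd9}) commutes, which is precisely the braided monoidal compatibility of $(F,\Fm)$. The compatibility diagrams (\ref{bd8.1}) and (\ref{bd8.2}) already hold because $(F,\Fa,\Fm)$ was an $Ann$-functor. Hence $(F,\Fa,\Fm)$ is a braided $Ann$-functor, and since it was already a categorical equivalence we may invoke the remark following Lemma \ref{lem31} (each $Ann$-equivalence admits a quasi-inverse constructible from its categorical quasi-inverse) to upgrade it to a braided $Ann$-equivalence.

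The main technical burden is organizing the pasting arguments for (B1), (B2), and (\ref{bd5}); each expands into a fairly large diagram in $\A'$, but every tile is either a monoidal-functor coherence square, a naturality square, or a diagram that holds by the braided $Ann$-structure on $\A'$, so no new idea beyond the transport-of-structure technique used for the underlying $Ann$-category is required.
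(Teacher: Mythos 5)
Your proposal is correct and follows essentially the same route as the paper: define $c$ by transporting $c'$ through the fully faithful equivalence $F$ via diagram (\ref{bd9}), then verify (B1), (B2) and (\ref{bd5}) by pasting the corresponding diagram for $c'$ in $\A'$ together with naturality squares for $\Fm$, $\Fa$, $c'$ and the coherence squares of the $Ann$-functor $(F,\Fa,\Fm)$, and descend by faithfulness. Your explicit treatment of naturality and of $c_{0,0}=id$ only fills in details the paper leaves implicit.
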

\begin{proof}
We must prove that $c$ satisfies the diagrams (B1), (B2) and  (\ref{bd5}).

In order to prove that $c$ satisfies the diagram (B1), we consider the following diagram:
\[\scriptsize\setlength\unitlength{0.5cm}
\begin{picture}(21,17)
\put(0,16){$(FX\tx FY)\tx FZ$}
\put(0,13){$F(X\tx Y)\tx FZ$}
\put(0,10){$F((X\tx Y)\tx Z)$}
\put(0,7){$F(X\tx (Y\tx Z))$}
\put(0,4){$FX\tx F(Y\tx Z)$}
\put(0,1){$FX\tx (FY\tx FZ)$}

\put(7,16){$(FY\tx FX)\tx FZ$}
\put(7,13){$F(Y\tx X)\tx FZ$}
\put(7,10){$F((Y\tx X)\tx Z)$}
\put(7,7){$F((Y\tx Z)\tx X)$}
\put(7,4){$F(Y\tx Z)\tx FX$}
\put(7,1){$(FY\tx FZ)\tx FX$}

\put(14,16){$FY\tx (FX\tx FZ)$}
\put(14,13){$FY\tx F(X\tx Z)$}
\put(14,10){$F(Y\tx (X\tx Z))$}
\put(14,7){$F(Y\tx (Z\tx X))$}
\put(14,4){$FY\tx F(Z\tx X)$}
\put(14,1){$FY\tx (FZ\tx FX)$}

\put(2, 1.7){\vector(0,1){2}}\put(0,2.5){$id\tx \Fm$}
\put(2, 4.7){\vector(0,1){2}}\put(1,5.5){$\Fm$}
\put(2, 7.7){\vector(0,1){2}}\put(0,8.5){$F(a)$}
\put(2, 12.8){\vector(0,-1){2.2}}\put(1,11.5){$\Fm$}
\put(2, 15.8){\vector(0,-1){2.2}}\put(0,14.5){$ \Fm\tx id$}

\put(9.5, 1.7){\vector(0,1){2}}\put(7.5,2.5){$\Fm\tx id$}
\put(9.5, 4.7){\vector(0,1){2}}\put(8,5.5){$\Fm$}
\put(9.5, 12.8){\vector(0,-1){2.1}}\put(8,11.5){$\Fm$}
\put(9.5, 15.8){\vector(0,-1){2.1}}\put(7.2,14.5){$\Fm\tx id$}

\put(15.5, 1.7){\vector(0,1){2}}\put(15.7,2.5){$id\tx \Fm$}
\put(15.5, 4.7){\vector(0,1){2}}\put(15.7,5.5){$\Fm$}
\put(15.5, 7.7){\vector(0,1){2}}\put(12.5,8.5){$F(id\tx c)$}

\put(15.5, 12.8){\vector(0,-1){2}}\put(15.7,11.5){$\Fm$}
\put(15.5, 15.8){\vector(0,-1){2}}\put(15.7,14.5){$id\tx \Fm$}

\put(6.8, 1.2){\vector(-1,0){2}}\put(5.6,0.5){$c'$}
\put(13.8, 1.2){\vector(-1,0){2}}\put(12.6,0.5){$a'$}

\put(6.8, 4.2){\vector(-1,0){2.4}}\put(5.6,4.3){$c'$}

\put(6.8, 7.1){\vector(-1,0){2.3}}\put(5,7.3){$F(c)$}
\put(13.8, 7.1){\vector(-1,0){2.3}}\put(12,7.3){$F(a)$}

\put(6.8, 10.1){\vector(-1,0){2.4}}\put(4.3,10.4){$F(c\tx id)$}
\put(13.8, 10.1){\vector(-1,0){2.3}}\put(12,10.3){$F(a)$}

\put(6.8, 13.1){\vector(-1,0){2.3}}\put(4.3,13.4){$F(c)\tx id$}

\put(6.8, 16.1){\vector(-1,0){2}}\put(4.8,16.3){$c'\tx id$}
\put(13.8, 16.1){\vector(-1,0){2}}\put(12.5,16.3){$a'$}

\put(-0.2, 1.1){\line(-1,0){1.5}}
\put(-1.7, 1.1){\line(0,1){15}}
\put(-1.7, 16.1){\vector(1,0){1.5}}\put(-1.5,8.5){$a'$}

\put(18.5, 4.1){\line(1,0){1.5}}\put(17.2,11.5){$id\tx  F(c)$}
\put(20, 4.1){\line(0,1){9}}
\put(20, 13.1){\vector(-1,0){1.5}}

\put(18.7, 1.1){\line(1,0){3}}
\put(21.7, 1.1){\line(0,1){15}}
\put(21.7, 16.1){\vector(-1,0){3}}\put(19.8,14.5){$id\tx  c'$}

\put(-1,11.5){(I)}
\put(5,14.5){(II)}
\put(5,11.5){(III)}
\put(5,5.5){(IV)}
\put(5,2.5){(V)}

\put(12,14.5){(VI)}
\put(8.6,8.5){(VII)}
\put(12,2.5){(VIII)}
\put(18,8.5){(IX)}
\put(20.5,8.5){(X)}
\end{picture}\]

In the above diagram, the regions (I), (VI), (VIII) commute thanks to the determination of the morphism $a$, the regions (II), (IV), (X) commute thanks to the determination of the morphism $c$, the region (III) and (IX) commute thanks to the naturality of $\Fm$, the region (V) commutes thanks to the naturality of  $c'$, the perimeter commutes since  $(\A', \tx)$ is a braided tensor category. Hence, the region (VII) commutes. Since $F$ is an isomorphism, the diagram (B1) commutes.

Similarly, the diagram (B2) also commutes.

In order to prove that the diagram (\ref{bd5}) commutes, we consider the following diagram.

In that diagram, the region (I) commutes thanks to the naturality of $c'$, the regions (II) and (VII) commute since $(F,\Fm)$ is a braided tensor functor respect to $\tx$, the region (III) commutes thanks to the determination of $\Lh$, the region (V) thanks to the determination of $\Rh$, the region  (VI) commutes since $(F,\Fa)$ is an $\ts$-functor, the perimeter commutes since $\A'$ is an $Ann$-category. Hence, the region (IV) commutes. Since $F$ is an equivalence and from the commutation of the region (IV), the diagram (\ref{bd5}) commutes. 


\[\scriptsize\setlength\unitlength{0.5cm}
\begin{picture}(22,19)
\put(2.5,18){$FA.(FX\ts FY)$}
\put(4,17.8){\vector(0,-1){2.2}} \put(4.4,16.5){$id\tx \Fa$}

\put(14,18){$FA.FX\ts FA.FY$}
\put(6.7,18.1){\vector(1,0){7}}\put(10,18.3){$\Lh'$}
\put(15.3,17.8){\vector(0,-1){2.2}} \put(13,16.5){$\Fm\ts \Fm$}

\put(2.5,15){$FA.F(X\ts Y)$}
\put(4,14.8){\vector(0,-1){2.2}}\put(3,13.5){$\Fm$}

\put(14,15){$F(AX)\ts F(AY)$}
\put(15.3,14.8){\vector(0,-1){2.2}}\put(14,13.5){$\Fa$}

\put(2.5,12){$F(A.(X\ts Y))$}

\put(14,12){$F(AX\ts AY)$}
\put(6.3,12.1){\vector(1,0){7.4}} \put(9.5,12.3){$F(\Lh)$}

\put(2.5,9){$F((X\ts Y).A)$}
\put(14,9){$F(XA\ts YA)$}
\put(6.3,9.1){\vector(1,0){7.4}}
\put(9.5,9.3){$F(\Rh)$}
\put(4,9.5){\vector(0,1){2.3}}
\put(4.5,10.5){$F(c)$}
\put(15.3,9.5){\vector(0,1){2.3}}
\put(12.5,10.5){$F(c\ts c)$}

\put(2.5,6){$F(X\ts Y).FA$}
\put(4,6.5){\vector(0,1){2.3}}\put(3,7.7){$\Fm$}

\put(14,6){$F(XA)\ts F(YA)$}

\put(15.3,6.5){\vector(0,1){2.3}}\put(14,7.7){$\Fa$}

\put(2.5,3){$(FX\ts FY).FA$}
\put(14,3){$FX.FA\ts FY.FA$}
\put(7,3.1){\vector(1,0){6.7}}\put(10,3.3){$\Rh$}
\put(4,3.5){\vector(0,1){2.3}}\put(1.5,4.7){$\Fa\tx id$}
\put(15.3,3.5){\vector(0,1){2.3}}\put(13,4.7){$\Fm\ts \Fm$}

\put(18.5,15.2){\line(1,0){1.5}}
\put(20,15.2){\line(0,-1){9}}
\put(20,6.2){\vector(-1,0){1.5}}

\put(2.2,15.2){\line(-1,0){1.5}}
\put(0.7,15.2){\line(0,-1){9}}
\put(0.7,6.2){\vector(1,0){1.5}}

\put(21.5,3.1){\vector(-1,0){2.7}}
\put(21.5,3.1){\line(0,1){15}}
\put(21.5,18.1){\line(-1,0){2.7}}

\put(-1,3.1){\vector(1,0){3}}
\put(-1,3.1){\line(0,1){15}}
\put(-1,18.1){\line(1,0){3}}

\put(-0.8,11.5){$c'$}
\put(0.2,10.5){$c'$}

\put(19.3,10.5){$t_1 $}

\put(22,10.5){$t_2 $}

\put(0,16.5){(I)}
\put(1.5,10.5){(II)}
\put(9.5,15){(III)}
\put(9.5,10.5){(IV)}

\put(16.7,10.5){(VI)}
\put(18.5,16.5){(VII)}

\put(9.5,6){(V)}

\put(0,1){\text{where}\ \ \ $t_1=F(c_{A,X})\oplus F(c_{A,Y})$, \qquad \qquad $t_2=c'_{FA,FX}\oplus c'_{FA, FY}.$}
\end{picture}\]

\end{proof}

In Section \ref{sec5}, we shall use the structure transport to construct a {\it reduced} braided $Ann$-category of a braided $Ann$-category.


\section{Reduced braided $Ann$-categories}\label{sec5}

Let $\A$ be a braided $Ann$-category with the family of constraints
$$(a^+,c^+,(0,g,d), a,c,(1,l,r), \Lh, \Rh).$$

Then, according to \cite{Q3}, the set of the iso-classes of objects $\pi_0\A$ of $\A$ is a ring with the operations
$+,\times$, induced by $\oplus, \otimes$ in $\A$, and $\pi_1\A = Aut(0)$ is a $\pi_0\A$-bimodule with the actions:
\[su=\lambda_X(u),\ \ \ \ us=\rho_X(u),\]
where $X\in s, \ s\in \pi_0\A,\ u\in \pi_1\A$, and the functions $\lambda, \rho$ are defined as follows:
\[\scriptsize\begin{diagram}
\node{X.0}\arrow{e,t}{\hat{L}^X}\arrow{s,l}{id\tx u}
\node{0}\arrow{s,r}{\lambda_X(u)}
\node[3]{0.X}\arrow{e,t}{\hat{R}^X}\arrow{s,l}{u\tx id}
\node{0}\arrow{s,r}{\rho_X(u)}\\
\node{X.0}\arrow{e,t}{\hat{L}^X}
\node{0}
\node{(9.1)}
\node[2]{0.X}\arrow{e,t}{\hat{R}^X}
\node{0}
\node{(9.2)}
\end{diagram}\]
where $\hat{L}^X$ (respectively $\hat{R}^X$) is the isomorphirm induces by the pair $(L^X, \La^X)$ (respectively $(R^X, \Ra^X)$) (for detail, see Proposition 1.3 \cite{Q12}).

Thanks to the braiding property of $\otimes$,  $\pi_0\A$ is a commutative ring. Moreover, the two-sided actions  of the ring $\pi_0\A$ over $\pi_1\A$ coincide. It is expressed in the following proposition.


\begin{pro}\label{md20}
In a braided $Ann$-category $\A$, $\lambda=\rho$.
\end{pro}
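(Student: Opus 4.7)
The plan is to give a short proof using only two ingredients: the naturality of the braiding $c$, and the fact that $\pi_1\A=\mathrm{Aut}(0)$ is abelian (which is forced by $(\A,\oplus,0)$ being a symmetric categorical group; the Eckmann--Hilton argument gives commutativity of $\mathrm{End}(0)$, and inside a categorical group every endomorphism of the unit is invertible).

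First I would apply the naturality of $c$ to the morphisms $id_X\colon X\to X$ and $u\colon 0\to 0$, obtaining the commutative square
\[
c_{X,0}\circ (id_X\otimes u) \;=\; (u\otimes id_X)\circ c_{X,0}.
\]
Setting $\psi=\hat{L}^X\circ c_{X,0}^{-1}\colon 0\otimes X\to 0$, this square together with the definition (9.1) of $\lambda_X(u)$ yields
\[
\psi\circ(u\otimes id_X)\circ\psi^{-1}
=\hat{L}^X\circ c_{X,0}^{-1}\circ(u\otimes id_X)\circ c_{X,0}\circ(\hat{L}^X)^{-1}
=\hat{L}^X\circ(id_X\otimes u)\circ(\hat{L}^X)^{-1}=\lambda_X(u).
\]

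Next I would compare this with $\rho_X(u)=\hat{R}^X\circ(u\otimes id_X)\circ(\hat{R}^X)^{-1}$. Since $\psi$ and $\hat{R}^X$ are both isomorphisms $0\otimes X\to 0$, they differ by a unique automorphism $\alpha\in\mathrm{Aut}(0)=\pi_1\A$, say $\hat{R}^X=\alpha\circ\psi$. Then
\[
\rho_X(u)=\alpha\circ\psi\circ(u\otimes id_X)\circ\psi^{-1}\circ\alpha^{-1}
=\alpha\circ\lambda_X(u)\circ\alpha^{-1}.
\]
Because $\pi_1\A$ is abelian, conjugation by $\alpha$ is trivial, hence $\rho_X(u)=\lambda_X(u)$, as required.

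The only subtle point — and the one I would flag as the main obstacle to articulate cleanly — is the last step: one must justify that replacing the canonical iso $\hat{R}^X$ by the braided iso $\psi$ in the formula for the conjugation action does not change the resulting element of $\mathrm{Aut}(0)$. This is precisely where the symmetric (as opposed to merely monoidal) character of $(\A,\oplus,0)$ enters the argument, via commutativity of $\pi_1\A$. Notably, the compatibility of $c$ with the distributivity constraints (diagram (\ref{bd5})) is \emph{not} needed for this proposition; naturality of the braiding is enough.
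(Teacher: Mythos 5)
Your proof is correct and follows essentially the route the paper indicates: naturality of the braiding converts $id_X\otimes u$ into $u\otimes id_X$, and the fact that the resulting automorphism of $0$ does not depend on which isomorphism $0\otimes X\to 0$ is used for the conjugation (which you justify by commutativity of $\mathrm{Aut}(0)$) is exactly the content the paper outsources to Proposition 4.2 of \cite{Q2}. Your closing remark is also accurate: only naturality of $c$ and the abelianness of $\pi_1\A$ are used, not the hexagons nor the compatibility of $c$ with the distributivity constraints.
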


\begin{proof} 
Directly deduced from Proposition 4.2 \cite{Q2} and the naturality of the braiding $c$.
\end{proof}

Let $\A$ be a braided $Ann$-category. Let $s\A$ be the reduced $Ann$-category of $\A$. Following, we shall transport the braiding $c$ of $\A$ for $s\A$ to make $s\A$ become a braided $Ann$-category. 

First, let us recall the main steps of the construction of a reduced $Ann$-category $s\A$ of an $Ann$-category $\A$, thanks to the structure transport (for detail, see  \cite{Q3}).  Objects of $s\A$ are the elements of $\pi_0\A$,
morphisms are automorphisms $(r,a): r\rightarrow r,\ r\in \pi_0\A, a\in \pi_1\A$. The composition law of two morphisms is defined by
\[(r,a)\circ (r,b)=(r,a+b).\]

In $\A$, we choose the representatives $X_s,\ s\in \pi_0\A$ such that $X_0=0, X_1=1$, and a family of isomorphisms $i_X: X\rightarrow X_s$ such that $i_{X_s}=id_{X_s}$. Then, we can determine two functors:\\

\quad{\small  \noindent\parbox{5cm}
{$G: \A\ri s\A,$\\
$G(X)=[X]=s,$\\
$G(f)=(s, \gamma_{X_s}^{-1}(i_Yfi_X^{-1})),$}
\qquad\qquad
\parbox{5cm}
{$H:s\A\ri \A,$\\
$H(s)=X_s,$\\
$H(s,u)=\gamma_{X_s}(u),$}}

\noindent
for $X,Y\in s$ and $f: X\ri Y$, and $\gamma_X$ is determined by the following commutative diagram:

\[\scriptsize\begin{diagram}\label{d9}
\node{X}\arrow{e,t}{\gamma_X(u)}
\node{X}\\
\node{0\ts X}\arrow{n,l}{g_{_{X}}}\arrow{e,t}{u\ts id}
\node{0\ts X}\arrow{n,r}{g_X}\tag{10}
\end{diagram}\]

\noindent Then, the operations on $s\A$ are defined:
\begin{eqnarray*} 
s\ts t&=&G(H(s)\ts H(t))=s+t,\\
s\tx t&=& G(H(s)\tx H(t))=st,\\
(s,u)\ts (t,v)&=&G(H(s,u)\ts H(t,v))=(s+t,u+v),\\
(s,u)\tx (t,v)&=&G(H(s,u)\tx H(t,v))=(st, sv+ut),
\end{eqnarray*}
with $s,t\in \pi_0\A$, $u,v\in \pi_1\A$. Obviously, they do not depend on the choice of the representative system $X_s, i_X.$

The constraints in $s\A$ are determined by the constraints in $\A$ by the concept of {\it stick}. A {\it stick} in $\A$ consists of a representatives
 $\left(X_s\right)_{s\in \pi_0\A}$ such that $X_0=0, X_1=1$ and isomorphisms 
\[\varphi_{s,t}: X_s\ts X_t\ri X_{s+t},\ \ \psi_{s,t}: X_sX_t\ri X_{st},\]
for all $s,t\in \pi_0\A,$ such that
\begin{eqnarray*}
\varphi_{0,t}=g_{X_t},\qquad  \varphi_{s,0}=d_{X_s},&\\
\psi_{1,t}=l_{X_t}, \qquad \psi_{s,1}=r_{X_s},& \psi_{0,t}=\Ro^{X_t},\quad \psi_{s,0}=\Lo^{X_s}.
\end{eqnarray*}

For the two operations $\ts,\tx$ of $s\A$, the unit constraints are chosen, respectively, as  $(0, id,id)$ and $(1,id, id)$. Put $\varphi=(\varphi_{s,t})$ and $\psi=(\psi_{s,t})$, we can define the constraints $\xi, \eta, \alpha, \lambda, \rho$ which are compatible, respectively, with the constraints $a^+, c^+, a, \frak{L}, \frak{R}$ of $\A$ via $H, \Ha=\varphi^{-1}, \Hm=\psi^{-1}, H_{\ast}=id_1$. Then
$$(\mathcal S, \xi,\eta, (0, id, id), \alpha, (1, id, id), \lambda, \rho)$$
is an  $Ann$-category which is equivalent to $\A$ by the $Ann$-equivalence $ H $.
Concurrently, the functor $G: \A\rightarrow s\A$ together with functor isomorphisms 
\begin{equation*}
\Ga_{X,Y}=G(i_X\ts i_Y),\quad \Gm_{X,Y}=G(i_X\tx i_Y),\quad G_{\ast}=id_1
\end{equation*} 
is an $Ann$-equivalence.

\begin{pro}\label{md23}
If  $\mathcal A$ is a braided $Ann$-category with the braiding $c$, then $s\A$ is a braided $Ann$-category with an induced braiding $c'=(\bullet, \beta)$ given by the following commutative diagram: 
\[\scriptsize\begin{diagram}
\node{X_r\tx X_s}\arrow{e,t}{\psi_{r,s}}\arrow{s,l}{c}
\node{X_{rs}}\arrow{s,r}{\gamma_{_{X_{rs}}}(\beta(r,s))}\\
\node{X_s\tx X_r}\arrow{e,t}{\psi_{s,r}}
\node{X_{sr}}
\end{diagram}\]
 Moreover, $(H, \Ha, \Hm)$, $(G,\Ga,\Gm)$ are braided $Ann$-equivalences.
\end{pro}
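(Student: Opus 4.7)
The plan is to apply the structure transport result (Proposition \ref{md21}) to the $Ann$-equivalence $(H,\Ha,\Hm):s\mathcal A\to\mathcal A$ recalled earlier. Since $\mathcal A$ is already a braided $Ann$-category, the proposition immediately endows $s\mathcal A$ with a unique braiding $c'$ for which $(H,\Ha,\Hm)$ becomes a braided $Ann$-equivalence; concretely, $c'_{r,s}:r\otimes s\to s\otimes r$ is characterized by the commutativity of the square
\[\Hm_{s,r}\circ c_{X_r,X_s}=H(c'_{r,s})\circ \Hm_{r,s}.\]

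Next, I identify the explicit form $c'=(\bullet,\beta)$. Because $\mathcal A$ is braided, the ring $\pi_0\mathcal A$ is commutative, so in $s\mathcal A$ one has $r\otimes s=rs=sr=s\otimes r$; hence $c'_{r,s}$ is an automorphism of $rs$, and every such automorphism takes the form $(rs,u)$ for a unique $u\in\pi_1\mathcal A$. Defining $\beta(r,s):=u$ and substituting $H(r)=X_r$, $\Hm_{r,s}=\psi_{r,s}$, and $H(rs,\beta(r,s))=\gamma_{X_{rs}}(\beta(r,s))$ into the characterizing equation above reproduces exactly the commutative diagram stated in the proposition.

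It remains to verify that $(G,\Ga,\Gm)$ is a braided $Ann$-equivalence. By the remark preceding Lemma \ref{lem31}, since the underlying functor $G$ is already a categorical equivalence, it suffices to check that the braiding-compatibility square
\[\Gm_{Y,X}\circ G(c_{X,Y})=c'_{[X],[Y]}\circ \Gm_{X,Y}\]
commutes. Using $\Gm_{X,Y}=G(i_X\otimes i_Y)$ together with the naturality of $c$ at the representative isomorphisms $i_X,i_Y$, this reduces, after unfolding the formula for $G$ on morphisms and the definition of $\gamma_X$ from the diagram defining the transport of automorphisms, to the defining commutativity of $c'$ on the representatives $X_{[X]},X_{[Y]}$ established in the previous paragraph.

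The main obstacle I expect is not conceptual but bookkeeping: one must chase the isomorphisms $i_X$ through the formulae defining $G$ on morphisms and match them with $\gamma_{X_{rs}}$ so that the outer braiding square for $G$ collapses onto the inner defining diagram of $c'$. Once this identification is made, no further calculation is required beyond the structure transport already granted to $H$.
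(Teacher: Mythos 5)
Your proposal is correct and follows essentially the same route as the paper, whose entire proof is a citation of the structure transport result (Proposition \ref{md21}) applied to the canonical $Ann$-equivalence $H:s\A\to\A$; you merely make explicit the identification of the transported braiding with $(\bullet,\beta)$ and the check for $G$, both of which are routine and consistent with the paper's conventions.
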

\begin{proof}
Directly deduced from Proposition \ref{md21}.
\end{proof}
We call  $s\A$ a {\it reduced braided $Ann$-category} of $\mathcal A$, and  $G, H$ {\it canonical braided Ann-equivalences}.
A reduced braided $Ann$-category $s\A$ of the braided $Ann$-category $\mathcal A$ is called a {\it braided $Ann$-category of the type $(R,M)$} if $\pi_0\mathcal A$ is replaced by $R$ and $\pi_1\mathcal A$ by $M$.

We now describe  specifically the constraints of the braided $Ann$-category $s\A$ of the type $(R,M)$.
We call the family $(h, \beta)$, where $h=(\xi, \eta, \alpha, \lambda, \rho)$, a {\it structure} of $s\A$.

\begin{pro}
In a braided $Ann$-category of the type $(R,M)$, the unit constraints are strict, and the family of other constraints is associated with a structure $(h, \beta)\in Z^3_{ab}(R, M)$.
\end{pro}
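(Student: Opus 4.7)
The plan is to verify three assertions in turn: strictness of the unit constraints; that $h=(\xi,\eta,\alpha,\lambda,\rho)$ is a normalized Mac Lane $3$-cocycle; and that $\beta$ satisfies the extra abelian equations S20, S21, S22. The first two reduce to material already developed in Section~\ref{sec5} and in cited work on ordinary $Ann$-categories, so only the third step requires real computation.

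For strictness, I would invoke the construction of $s\A$ recalled just before Proposition~\ref{md23}: the chosen stick has $X_0=0$, $X_1=1$ together with $\varphi_{0,t}=g_{X_t}$, $\varphi_{s,0}=d_{X_s}$, $\psi_{1,t}=l_{X_t}$, $\psi_{s,1}=r_{X_s}$, $\psi_{0,t}=\Ro^{X_t}$ and $\psi_{s,0}=\Lo^{X_s}$. Under the transport along $H\colon s\A\to \A$, these choices force the $\oplus$-unit and the $\otimes$-unit of $s\A$ to be the strict units $(0,\mathrm{id},\mathrm{id})$ and $(1,\mathrm{id},\mathrm{id})$, and they simultaneously force the normalization relations on $\xi,\eta,\alpha,\lambda,\rho$ at $0$ and $1$. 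For the Mac Lane cocycle claim, forgetting $c'$ turns $s\A$ into the reduced $Ann$-category of $\A$, so the analogous result for ordinary $Ann$-categories (Propositions 7.2 and 7.3 of \cite{Q3}) applies and gives immediately $h\in Z^3_{\mathrm{MacL}}(R,M)$.

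The genuine work is verifying S20, S21, S22 for $\beta$. By Proposition~\ref{md23}, the transported braiding $c'_{r,s}$ is the automorphism $(rs,\beta(r,s))\colon rs\to rs$ in $s\A$. Since every morphism of $s\A$ is an automorphism labelled by an element of $M$, and composition is addition in $M$, each coherence diagram collapses to a single linear relation in $M$. Substituting the explicit forms of $a$, $c'$, $\La$ and $\Ra$ into the hexagon (B1) for a triple $(x,y,z)\in R^3$, cancelling the strict units, and using Proposition~\ref{md20} to identify left and right actions of $R$ on $M$, I would read off both legs as a sum of elements of $M$; their equality is precisely S20. The hexagon (B2) yields S21 by the symmetric computation. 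Finally, diagram~(6) expressing compatibility of $c$ with the distributivity constraints, applied to $A(X\ts Y)$ with $A,X,Y$ identified with scalars $x,y,z$, gives the relation
\[\beta(x,y)-\beta(x,y+z)+\beta(x,z)=\rho(y,z,x)-\lambda(x,y,z),\]
which is S22.

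The main obstacle is the bookkeeping in this last step. The scalar actions appearing in each hexagon arise from the functoriality of $\otimes$ applied to a braiding $2$-cell $(rs,\beta(r,s))$: one must check that tensoring such a cell with $\mathrm{id}_t$ produces $(rst,\,t\cdot\beta(r,s))$ on one side and $(rst,\,\beta(r,s)\cdot t)$ on the other, matched by $\lambda=\rho$, and one must track orientations so that the signs in S20--S22 come out correctly. With these identifications made, the three equations fall out by direct inspection of the two legs of each diagram; no new ideas beyond the structure transport and the coincidence $\lambda=\rho$ are required.
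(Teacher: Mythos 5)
Your proposal is correct and is essentially the paper's argument: the paper's entire proof reads ``Directly deduced from the axiomatics of a braided $Ann$-category,'' and your three steps (strictness from the choice of stick, $h\in Z^3_{MacL}(R,M)$ from the underlying $Ann$-category, and the translation of each remaining coherence diagram into a single additive relation in $M$) are exactly that deduction spelled out. One bookkeeping correction: with the paper's conventions the hexagon (B1) is the one involving $c_{X,Y\otimes Z}$ and hence produces the term $\beta(x,yz)$, so it yields (S21), while (B2) involves $c_{X\otimes Y,Z}$ and yields (S20) --- the opposite of your assignment; your derivation of (S22) from diagram (6) is as stated.
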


\begin{proof} Directly deduced from the axiomatics of a braided $Ann$-category.
\end{proof}

\begin{Note}
If $R$ trivially acts over $M$, the relations 
(S13), (S14) prove that $(\alpha, \beta)$ is an abelian 3-cocycle of the additive group $R_+$ with coefficients in $M$ of group  $H^{3}_{ab}(R_+, M)$ in the sense of  Eilenberg - Mac Lane \emph{(\cite{EM}, \cite{Mac1})}.
\end{Note}

We now consider the independence of induced constraints over $s\A$ if we change the stick.

\begin{pro}\label{md31}
If $s\A$ and $s\A'$ are two reduced braided $Ann$-categories of a braided $Ann$-category $\A,$ with, respectively, two sticks  $(X_s, \varphi, \psi)$, $(X'_s, \varphi', \psi')$,  then:
\begin{enumerate}
\item[\emph{(i)}] There exists a braided $Ann$-equivalence   $(F, \Fa, \Fm): s\A'\ri s\A$, where $F=id$.
\item[\emph{(ii)}] Two structures  $(h, \beta)$ and $(h', \beta')$  of , respectively, $s\A$ and $s\A'$ belong to the same cohomology class of the commutative ring $R$ with coefficients in $R$-module $M$.
\end{enumerate}
\end{pro}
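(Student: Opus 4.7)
The plan is to deduce part (i) directly from the canonical braided $Ann$-equivalences supplied by Proposition \ref{md23}, and then to read off the coboundary data in part (ii) by interpreting the structural isomorphisms $\Fa, \Fm$ of the resulting equivalence as cochains on $R$ with values in $M$.

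For (i): Applying Proposition \ref{md23} to each stick separately yields canonical braided $Ann$-equivalences $(G, \Ga, \Gm): \A \to s\A$ and $(H', \Ha', \Hm'): s\A' \to \A$. Their composition $F_0 := G \circ H'$ is a braided $Ann$-equivalence $s\A' \to s\A$. On objects, $F_0(s) = G(X'_s) = [X'_s] = s$ since $X'_s$ lies in the class $s \in \pi_0\A$; on morphisms, unwinding the definitions of $G$ and $H'$ (and using that $\gamma_{X_s}$ and $\gamma_{X'_s}$ both implement the canonical identification $\pi_1\A \cong \mathrm{Aut}(X_s) \cong \mathrm{Aut}(X'_s)$) shows that $F_0$ acts as the identity on the automorphism components. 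After deforming $F_0$ via Lemma \ref{lem31} so that the new unit isomorphism is trivial, we obtain a braided $Ann$-equivalence $(F, \Fa, \Fm): s\A' \to s\A$ with $F = id$.

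For (ii): Because $F$ is the identity on the underlying category $s\A' = s\A$ (as categories of pairs), the structural isomorphisms $\Fa_{r,s}: r \ts s \to r \ts s$ and $\Fm_{r,s}: r \tx s \to r \tx s$ are automorphisms, hence correspond to a pair of normalized maps $(\mu, \nu) : R \times R \to M$. I will then translate each compatibility diagram for $(F, \Fa, \Fm)$ into a component-wise identity: the two monoidal compatibilities of $(F, \Fa)$ with the associativity and commutativity of $\oplus$ yield precisely S13 and S14; the monoidal compatibility of $(F, \Fm)$ with the associativity of $\tx$ yields S15; and the two distributivity diagrams (\ref{bd8.1})–(\ref{bd8.2}) yield S16 and S17. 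Together these assert $h - h' = \partial_{MacL}(\mu, \nu)$. Finally, the braided compatibility diagram (\ref{bd9}) evaluated on the reduced categories collapses, after cancelling the identity $F$-values, to the relation
\[
\beta(r,s) - \beta'(r,s) = \nu(r,s) - \nu(s,r),
\]
so $(h, \beta) - (h', \beta') = \partial(\mu, \nu) \in B^3_{ab}(R, M)$. Normalization of $(\mu, \nu)$ is inherited from the stick conventions $X_0 = X'_0 = 0$, $X_1 = X'_1 = 1$ together with the prescribed values of $\varphi_{0,t}, \varphi_{s,0}, \psi_{1,t}, \psi_{s,1}, \psi_{0,t}, \psi_{s,0}$.

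The main obstacle is not conceptual but bookkeeping: one must patiently verify that the general commutative diagrams defining an $Ann$-functor and a braided $Ann$-functor do, once all $F$-values collapse to identities and all $\gamma$-identifications are inserted, literally reproduce the equations S13–S17 and $\beta - \beta' = \nu(\cdot,\cdot) - \nu(\cdot,\cdot)^{\mathrm{op}}$. Since this is formally the braided enrichment of the argument already carried out for ordinary $Ann$-categories in \cite{Q3}, we only need to check the extra braided piece (\ref{bd9}) carefully, and the rest is the same calculation repeated with the added $\beta$-component.
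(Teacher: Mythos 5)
Your proposal is correct and follows essentially the same route as the paper: compose the canonical braided $Ann$-equivalences $G$ and $H'$ attached to the two sticks to get $F=G\circ H'=id$ on the common underlying category, then read the pair $(\mu,\nu)=(\Fa,\Fm)$ as cochains and extract $(h,\beta)-(h',\beta')=\partial(\mu,\nu)$ from the compatibility diagrams. The paper states this more tersely (it does not spell out the component-wise identification with S13--S17 nor the normalization via Lemma \ref{lem31}), but the argument is the same.
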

\begin{proof}
(i) Call $G:\mathcal A\rightarrow  s \mathcal A$ and $H': s'\mathcal A\rightarrow \mathcal A$ canonical braided $Ann$-equivalences corresponding to $(X_s, \varphi, \psi)$, $(X'_s, \varphi', \psi')$. The composition $F=GH': s'\mathcal A\rightarrow  s\mathcal A$ is a braided $Ann$-equivalence. The underlying categories of $s\mathcal A$ and $s'\mathcal A$ are the same, and it is easy to see that $F=id_{s\mathcal A}$ are functors.

(ii) Setting $\mu=\Breve{GH'}, \nu=\widetilde{GH'}$. Then, from the compatibility of the braided $Ann$-functor $(F=id, \mu, \nu)$ with, respectively, the constraints of $s\A$ and $s\A'$,  we deduce the required relations.
\end{proof}



Clearly, we have
\begin{hq}\label{md32}
Two structures $(h,\beta), (h', \beta')$ are cohomologous iff they are two families of constraints of a braided $Ann$-category of the type $(R,M)$ which are compatible with each other respect to the braided $Ann$-equivalence $(F, \Fa, \Fm)$, where $F=id_{\A}$.
\end{hq}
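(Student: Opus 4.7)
The plan is to unpack both directions of the iff by translating the axioms of a braided $Ann$-functor $(F, \Fa, \Fm)$ with $F=id_{\A}$ directly into equations in $M$, so that these equations recover exactly the coboundary relations S13--S17 together with the new relation $\beta(x,y)-\beta'(x,y)=\nu(x,y)-\nu(y,x)$ defining $B^3_{ab}(R,M)$. Because $F=id_{\A}$ fixes objects and morphisms, every natural isomorphism $\Fa_{X,Y}:X\ts Y \to X\ts Y$ and $\Fm_{X,Y}:X\tx Y\to X\tx Y$ is an automorphism, so it is forced to have the form $\Fa_{x,y}=(x+y,\mu(x,y))$ and $\Fm_{x,y}=(xy,\nu(x,y))$ for some functions $\mu,\nu:R^2\to M$; the functor-axioms on units then force the normalizations $\mu(0,y)=\mu(x,0)=0$ and $\nu(1,y)=\nu(x,1)=\nu(0,y)=\nu(x,0)=0$.

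For the implication $(\Leftarrow)$, given a braided $Ann$-equivalence $(id_{\A},\Fa,\Fm)$ between the two structures, I would read each compatibility axiom as an equation in the abelian group $M$. Compatibility of $(id_{\A},\Fa)$ with $a^{+}$ produces S13, with $c^{+}$ produces S14; compatibility of $(id_{\A},\Fm)$ with $a$ produces S15; the hexagon (\ref{bd8.1}) produces S16 and (\ref{bd8.2}) produces S17 (the latter being automatic by the previous proposition); and finally the compatibility with the braiding $c$, which is diagram (\ref{bd9}) evaluated on objects of $s\A$, produces the missing relation $\beta(x,y)-\beta'(x,y)=\nu(x,y)-\nu(y,x)$. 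Collecting these gives $(h,\beta)-(h',\beta')=\partial(\mu,\nu)$, i.e.\ $(h,\beta)$ and $(h',\beta')$ lie in the same class of $H^{3}_{ab}(R,M)$.

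For the implication $(\Rightarrow)$, given normalized $\mu,\nu:R^{2}\to M$ witnessing $(h,\beta)-(h',\beta')=\partial(\mu,\nu)$, I would conversely define $\Fa_{x,y}=(x+y,\mu(x,y))$ and $\Fm_{x,y}=(xy,\nu(x,y))$ and set $F=id_{\A}$. Each coboundary relation is then rearranged into the commutativity of the corresponding hexagon or square from the braided $Ann$-functor definition, so that $(id_{\A},\Fa,\Fm)$ becomes a braided $Ann$-functor between $s\A$ and $s\A'$; since its underlying functor $F$ is a categorical equivalence, it is a braided $Ann$-equivalence by the remark following the definition. This is exactly the converse of the computation used in the proof of Proposition \ref{md31}(ii), so the two constructions are inverse to each other.

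The main obstacle is bookkeeping rather than conceptual: one must match each of the six braided $Ann$-functor axioms (three for $\oplus$, three for $\otimes$/distributivity) and the braiding-compatibility axiom against the seven components S13--S17 plus the $\nu$-symmetrization relation, and verify that the correspondence is bijective. The non-braided part of this dictionary is already implicit in the $Ann$-category classification of \cite{Q3}; the genuinely new content is the single extra equation $\beta-\beta'=\nu(x,y)-\nu(y,x)$ coming from the braiding, which matches precisely the definition of $B^{3}_{ab}(R,M)$ recalled in Section 2.2.
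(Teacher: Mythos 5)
Your proposal is correct and is essentially the argument the paper intends: the corollary is presented as an immediate consequence of Proposition \ref{md31} (whose proof already extracts the coboundary relations from the compatibility of the identity-on-objects braided $Ann$-equivalence with the constraints), and the converse direction is the same construction used in the proof of Theorem \ref{s63}, taking $\Fa,\Fm$ associated with $(\mu,\nu)$. Your explicit dictionary matching S13--S17 and the relation $\beta-\beta'=\nu(x,y)-\nu(y,x)$ to the respective compatibility diagrams just spells out what the paper leaves implicit.
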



We call a {\it trace} of the structure  $(h,\beta)$ is a map 
\begin{eqnarray*}
R&\rightarrow& M,\\
x&\mapsto& \beta(x,x).
\end{eqnarray*}

\begin{hq}
If $(h, \beta)$ and $(h', \beta')$ are two cohomologous structures, then \linebreak $[h]=[h']\in H^{3}_{MacL}(R, M)$ and two braidings $\beta, \beta'$ have the same trace.
\end{hq}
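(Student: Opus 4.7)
The plan is to unpack the definition of cohomologous structures in $Z^3_{ab}(R,M)$ and read off both conclusions directly. By the definition of $B^3_{ab}(R,M)$ given in Section 2.2, saying that $(h,\beta)$ and $(h',\beta')$ are cohomologous means
\[(h,\beta)-(h',\beta')=\partial(\mu,\nu)\]
for some normalized pair of maps $(\mu,\nu):R^2\to M$, where the right-hand side by definition is the pair whose first component is $\partial_{MacL}(\mu,\nu)\in B^3_{MacL}(R,M)$ and whose second component is the map $(x,y)\mapsto \nu(x,y)-\nu(y,x)$.

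First, I would extract the Mac Lane part. Since $h-h'=\partial_{MacL}(\mu,\nu)$ is a 3-coboundary in the Mac Lane complex, by definition of the quotient group $H^3_{MacL}(R,M)=Z^3_{MacL}(R,M)/B^3_{MacL}(R,M)$ we obtain $[h]=[h']$ in $H^3_{MacL}(R,M)$, which is the first assertion.

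Second, I would evaluate the braiding relation on the diagonal. From $\beta(x,y)-\beta'(x,y)=\nu(x,y)-\nu(y,x)$, setting $y=x$ gives
\[\beta(x,x)-\beta'(x,x)=\nu(x,x)-\nu(x,x)=0,\]
so $\beta(x,x)=\beta'(x,x)$ for every $x\in R$; that is, $\beta$ and $\beta'$ induce the same trace $R\to M$.

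There is no real obstacle here: the entire argument is an immediate unpacking of the definition of $B^3_{ab}(R,M)$ from Section 2.2. The only thing worth stressing in the write-up is that the antisymmetric form $\nu(x,y)-\nu(y,x)$ of the coboundary on the braiding component is precisely what forces the trace to be a cohomology invariant, explaining why the trace (rather than $\beta$ itself) is the correct cohomological datum to retain.
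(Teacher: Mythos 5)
Your proof is correct and is precisely the argument the paper leaves implicit (the corollary is stated without proof as an immediate consequence of the definition of $B^3_{ab}(R,M)$): the Mac Lane component of the coboundary gives $[h]=[h']$, and the antisymmetry of $\nu(x,y)-\nu(y,x)$ on the diagonal gives equality of traces. Nothing further is needed.
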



In Section \ref{sec7}, we present the third invariant of a braided $Ann$-category.


\section{Classification of braided $Ann$-functors of the type $(p, q)$}\label{sec6}

We now prove that each braided $Ann$-functor $(F, \Fa,\Fm):\A\ri\A^{'}$ induces a braided $Ann$-functor $sF$ on their reduced braided $Ann$-categories. Throughout this section, let $\mathcal S, \mathcal S'$  denote the braided $Ann$-categories form $(R,M,h,\beta),$ $ (R',M',h',\beta')$. 

\begin{dn}
 A functor  $F: \mathcal S\ri \mathcal S'$ is called a \emph{functor of the type $(p, q)$} if 
$$F(x)=p(x),\ \ \ \ F(x,a)=(p(x), q(a)), $$ 
where $p:R\ri R'$ is a ring homomorphism and  $q:M\ri M'$ is a group homomorphism satisfying 
$$q(xa)=p(x)q(a),\quad x\in R, a\in M.$$ 
\end{dn}

$M'$ can be regarded as an $R$-module with the action $sa'=p(s)a'$, then $q$ is an $R$-module homomorphism. We say that $(p,q)$ is a {\it pair of homomorphisms}.

\begin{pro}\label{s61}
Each braided $Ann$-functor from $\mathcal S$ to $\mathcal S'$ is a functor of the type $(p, q).$
\end{pro}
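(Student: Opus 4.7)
The plan is to extract the ring homomorphism $p$ and the additive map $q$ directly from the action of $F$ on objects and morphisms of $\mathcal{S}$, and then read off their algebraic properties from the existence and the naturality of the structure isomorphisms $\Fa$ and $\Fm$.

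First I would set $p(x) := F(x) \in R'$. The key observation driving almost everything is that in $\mathcal{S}'$ every hom-set $\mathrm{Hom}(u,v)$ with $u \neq v$ is empty, since morphisms are only automorphisms. Consequently, the mere existence of the isomorphisms $\Fa_{x,y}\colon F(x)\ts F(y)\to F(x\ts y)$ and $\Fm_{x,y}\colon F(x)F(y)\to F(xy)$ in $\mathcal{S}'$ forces the equalities $p(x)+p(y)=p(x+y)$ and $p(x)p(y)=p(xy)$ at the level of objects. After normalizing $F$ via Lemma \ref{lem31} so that $F_{\ast}=\mathrm{id}_{1'}$, and similarly $F_{+}=\mathrm{id}_{0'}$, the unit constraints of $F$ force $p(1)=1$ and $p(0)=0$. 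Hence $p$ is a unital ring homomorphism.

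Next, every morphism $(x,a)\colon x\to x$ of $\mathcal{S}$ is sent to an automorphism of $p(x)$ in $\mathcal{S}'$, so there exist maps $q_x\colon M\to M'$ with $F(x,a)=(p(x),q_x(a))$. Since composition in both $\mathcal{S}$ and $\mathcal{S}'$ is addition in $M$ and $M'$, functoriality of $F$ makes each $q_x$ an additive homomorphism. The remaining task is to show that the $q_x$ do not depend on $x$ and to establish the module-compatibility $q(xa)=p(x)q(a)$. I would do this by chasing the naturality squares of $\Fa$ and $\Fm$ against the pair of morphisms $(x,a)$ and $(y,b)$: writing $\Fa_{x,y}=(p(x+y),\mu(x,y))$ and $\Fm_{x,y}=(p(xy),\nu(x,y))$ with $\mu(x,y),\nu(x,y)\in M'$, and cancelling the common structure term, the $\Fa$-square reduces to
\[ q_x(a)+q_y(b)=q_{x+y}(a+b), \]
whose specialization $b=0$ gives $q_x=q_{x+y}$, so all $q_x$ coincide with a single additive map $q:=q_0$; and the $\Fm$-square reduces to
\[ p(x)q(b)+p(y)q(a)=q(xb)+q(ya), \]
whose specialization $a=0$ yields the required identity $q(xb)=p(x)q(b)$.

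I do not anticipate a real obstacle. The heart of the argument is the empty-hom-set observation that promotes ``$F$ is a functor compatible with $\ts$ and $\tx$'' into ``$F$ on objects is a ring homomorphism,'' after which naturality of $\Fa$ and $\Fm$ does the rest. Note that neither the braiding $c$ nor the distributivity constraints $\Lh,\Rh$ enter the argument, which is consistent with the fact that the data $(p,q)$ carry no commutativity information; the same conclusion would apply to any $\ts$- and $\tx$-compatible functor between $Ann$-categories of the type $(R,M)$.
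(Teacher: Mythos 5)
Your argument is correct, but it is not the route the paper takes: the paper's entire proof is the one-line observation that a braided $Ann$-functor is in particular an $Ann$-functor, followed by a citation of Proposition 4.3 of \cite{Q10}, which already establishes that every $Ann$-functor between $Ann$-categories of the type $(R,M)$ is of the type $(p,q)$. What you have done is essentially reprove that cited result from scratch, and your closing remark --- that neither the braiding nor $\Lh,\Rh$ enters the argument, so the statement holds for any $\ts$- and $\tx$-compatible functor --- is precisely the content of the paper's reduction. Your key mechanism (hom-sets between distinct objects of $\mathcal S'$ are empty, so the mere existence of $\Fa_{x,y}$ and $\Fm_{x,y}$ forces $p$ to be additive and multiplicative, after which naturality of $\Fa$ and $\Fm$ pins down $q$) is the standard one and all the computations check out: the $\Fa$-square gives $q_x(a)+q_y(b)=q_{x+y}(a+b)$, hence independence of $q_x$ from $x$, and the $\Fm$-square gives $p(x)q(b)+p(y)q(a)=q(xb)+q(ya)$, hence $q(xb)=p(x)q(b)$. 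Two minor simplifications: you do not actually need Lemma \ref{lem31} to get $p(1)=1'$, since the existence of the morphism $F_{\ast}:F(1)\rightarrow 1'$ already forces $F(1)=1'$ by the same empty-hom-set observation; and $p(0)=0$ follows from additivity of $p$ alone. The trade-off is the usual one: the paper's proof is shorter but opaque without \cite{Q10} in hand, while yours is self-contained and makes visible exactly which pieces of the structure are used.
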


\begin{proof}
Since each braided $Ann$-functor is of course an $Ann$-functor and from Proposition 4.3 \cite{Q10}, the proof is completed.
\end{proof}

\begin{pro}\label{md22}
Let $\A$ and $\A'$ be two braided $Ann$-categories. Then each braided $Ann$-functor $(F,\Fa,\Fm, F_\ast):\A\ri \A'$ induces a braided $Ann$-functor $sF:s\A\ri s\A'$ of the type $(p, q),$ where
$$p=F_0:\pi_0\A\ri \pi_0\A'\ ;\ [X] \mapsto [FX],$$
$$q=F_1:\pi_1\A\ri\pi_1\A'\ ;\ u \mapsto \gamma_{F0}^{-1}(Fu),$$
where $\gamma$ is the map determined  by the diagram (\ref{d9}), with the following properties:
\begin{enumerate}
\item[(i)] $F$ is an equivalence iff $F_0, F_1$ are isomorphisms.
\item[(ii)] The braided $Ann$-functor $sF$ satisfies the following commutative diagram:
\[\scriptsize\begin{diagram}
\node{\A}\arrow{e,t}{F}
\node{\A'}\arrow{s,r}{G'}\\
\node{s\A}\arrow{n,l}{H}\arrow{e,t}{sF}
\node{s\A'}
\tag{11}
\end{diagram}\] 
where $H, G'$ are canonical braided $Ann$-equivalences.
\end{enumerate}
\end{pro}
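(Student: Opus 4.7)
The plan is to set $sF:=G'\circ F\circ H\colon s\A\ri s\A'$, where $H\colon s\A\ri\A$ and $G'\colon\A'\ri s\A'$ are the canonical braided $Ann$-equivalences from Proposition \ref{md23} (applied to $\A$ and to $\A'$, respectively). Since composition of braided $Ann$-functors is again a braided $Ann$-functor---the structural isomorphisms $\Fa$, $\Fm$ of $sF$ being obtained by pasting those of $H$, $F$, $G'$---the only remaining tasks are to identify the components $(p,q)$ of the resulting functor and to verify (i). Diagram (11) commutes on the nose by construction, so (ii) is immediate.

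By Proposition \ref{s61}, $sF$ is automatically a functor of some type $(p,q)$, so it suffices to read off $p$ and $q$ on a single object and a single endomorphism. On objects, $sF(s)=G'(F(H(s)))=G'(FX_s)=[FX_s]$, which is independent of the chosen representative and equals $F_0([X])$ for any $X\in s$. For an automorphism $(0,u)\colon0\ri0$ in $s\A$, the conventions $X_0=0$ together with the standard identity $\gamma_0=\mathrm{id}_{\pi_1\A}$ give $H(0,u)=u$; then $F$ produces $Fu\in\mathrm{Aut}(F0)$; and $G'$ finally expresses this automorphism as an element of $\pi_1\A'=\mathrm{Aut}(0')$ via the isomorphism $\gamma_{F0}$, yielding $q(u)=\gamma_{F0}^{-1}(Fu)=F_1(u)$. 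The relation $q(xa)=p(x)q(a)$ then drops out of compatibility of $sF$ with the left distributor (diagram (7.1) read in the reduced setting).

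For part (i), my plan is a two-out-of-three argument. Write $H'\colon s\A'\ri\A'$ for the canonical equivalence associated with $\A'$. If $F$ is a categorical equivalence then $sF=G'FH$ is too by composition; conversely, from $HG\simeq\mathrm{id}_\A$ and $H'G'\simeq\mathrm{id}_{\A'}$ one obtains $F\simeq H'\,sF\,G$, so if $sF$ is an equivalence then so is $F$. It then remains to observe that a functor of type $(p,q)$ between reduced braided $Ann$-categories is essentially surjective iff $p$ is surjective (distinct elements of $R'$ lie in distinct iso-classes) and fully faithful iff $q$ is bijective; a bijective ring (resp.\ module) homomorphism is automatically an isomorphism.

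The step I expect to be the main obstacle is justifying the explicit formula $q(u)=\gamma_{F0}^{-1}(Fu)$: one has to track carefully how $G'$ evaluates on an automorphism of the generally non-distinguished object $F0$ through the chosen stick of $\A'$, relying on the normalisation $G'_{\ast}=\mathrm{id}$ secured by Lemma \ref{lem31} and on $\gamma_0=\mathrm{id}$ for the zero object. Everything else reduces to a routine pasting of the coherence data already packaged in $G'$, $F$, and $H$.
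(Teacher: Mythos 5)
Your proposal is correct and takes essentially the same route as the paper: the paper likewise defines $sF=G'\circ F\circ H$, observes that it is a braided $Ann$-functor as a composite of braided $Ann$-functors, and refers to earlier work (Theorem 4.6 of the structure paper and Proposition 4.2 of the $Ann$-functor paper) for the identification of its type as $(F_0,F_1)$ and the commutativity of diagram (11). The extra details you supply --- the computation of $q$ through the $\gamma$ maps and the two-out-of-three argument for (i) --- are precisely the content the paper delegates to those citations.
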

\begin {proof}
(i) Call $sF$ the composition $G'\circ F\circ H.$ Therefore, $sF$ is a braided Ann-functor. One can verify that (see Theorem 4.6 \cite{Q3} for detail) $sF$ is of the type $(F_0, F_1)$.

\noindent (ii) According to Proposition 4.2 \cite{Q10}, we infer that the functor $sF$ makes the diagram (11) commute. Besides, since $F, H, G'$ are braided $Ann$-functors, so is $sF.$
\end {proof}

\begin{Note}\label{nx52}
If $F=(F,\Fa,\Fm,F_{\ast}):\A\rightarrow \A'$ is a braided $Ann$-functor,  the induced braided $Ann$-functor $sF:s\A\rightarrow s\A'$ satisfies  $sF(1)=1'$ and $ (sF)_{\ast}=(1', \gamma_{1'}^{-1}(F_{\ast}\circ i_{F1}^{-1}))$. Moreover, if $F1=1'$ and $F_{\ast}=id_{1'}$, then $(sF)_{\ast}=id_{1'}.$
\end{Note}

Since $\Fa_{x,y}=(\bullet, \mu(x,y)), \, \Fm_{x,y}=(\bullet, \nu(x,y)),$ we call $g_F$  the {\it pair of associated maps} with $(\Fa, \Fm),$ and the braided $Ann$-functor $F:\mathcal S\rightarrow\mathcal S'$ can be considered as a triplet $(p,q,g_F)$.
Thanks to  the compatibility of $F$ with the constraints, we infer
\begin{equation}
p^{\ast}(h',\beta')-q_{\ast}(h,\beta)=\partial (g_F), \tag{12}\label{7}
\end{equation}
where $p^{\ast}, q_{\ast}$ are canonical homomorphisms
$$Z^{3}_{ab}(R, M)\stackrel{q_{\ast}}{\longrightarrow}Z^{3}_{ab}(R, M')\stackrel{p^{\ast}}{\longleftarrow} Z^{3}_{ab}(R', M').$$

Moreover, two braided $Ann$-functors $(F,g_F), (F',g_{F'})$ are homotopic iff \linebreak
 $F'=F$, it means that they are of the same type $(p,q)$, and there exists a map $t:R\rightarrow M'$, such that
\begin{equation}
  g_{F'}=g_F+\partial t.\tag{13}\label{12}
\end{equation}  

We denote the set of homotopy classes  of braided $Ann$-functors of the type
$(p,q)$ from $\mathcal S$ to $\mathcal S'$ by
 $$Hom_{(p, q)}^{BrAnn}[\mathcal S, \mathcal S'].$$

If $F:\mathcal S\rightarrow \mathcal S'$ is a functor of the type $(p,q)$, the function 
$$k=p^{\ast}(h',\beta')-q_{\ast}(h,\beta)$$
is called {\it an obstruction} of the functor of the type $(p, q).$


\begin{thm}\label{s63}
The functor  $F:\mathcal S\ri \mathcal S' $ of the type  $(p, q)$ is a braided $Ann$-functor iff  its obstruction $\overline{k}=0$ in  $H_{ab}^3(R, M')$. Then, there exist bijections:
\emph{(i)}  
\begin{equation}\label{14} Hom_{(p, q)}^{BrAnn}[\mathcal S, \mathcal S']\leftrightarrow H^2_{ab}(R, M')\tag{14};
\end{equation}
\emph{(ii)} \begin{equation*} Aut(F)\leftrightarrow Z^1_{ab}(R, M').
\end{equation*}
\end{thm}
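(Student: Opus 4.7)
The plan is to derive both assertions from the structural equations~(12) and~(13) that precede the theorem. First, I would verify that the obstruction
$k=p^{\ast}(h',\beta')-q_{\ast}(h,\beta)$
actually lies in $Z^{3}_{ab}(R,M')$: pullback along the ring homomorphism $p$ and pushforward along the $R$-module map $q$ each preserve the cocycle relations $S1$--$S22$ together with the normalization conditions, and $Z^{3}_{ab}(R,M')$ is closed under subtraction. So the class $\overline{k}\in H^{3}_{ab}(R,M')$ is well defined.

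For the ``iff'' part, equation~(12) is precisely the statement that a pair $(F,g_F)$ with $g_F=(\mu,\nu)\colon R^{2}\to M'$ forms a braided $Ann$-functor iff $\partial g_F=k$. Hence such a $g_F$ exists iff $k$ is a $3$-coboundary, i.e.\ iff $\overline{k}=0$ in $H^{3}_{ab}(R,M')$.

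For bijection~(i), I would fix one braided $Ann$-functor structure $g_F$ on $F$, so $\partial g_F=k$; any other structure has the form $g_F+z$ with $z\in Z^{2}_{ab}(R,M')$, because $\partial(g_F+z)=k$ is equivalent to $\partial z=0$. By equation~(13), two such structures $g_F+z$ and $g_F+z'$ are homotopic iff $z'-z=\partial t\in B^{2}_{ab}(R,M')$, so the assignment $[F,g_F+z]\mapsto [z]$ yields the desired bijection onto $H^{2}_{ab}(R,M')$. For bijection~(ii), an element of $Aut(F)$ is a homotopy $(F,g_F)\to(F,g_F)$; by~(13) with $g_{F'}=g_F$, this is precisely a normalised map $t\colon R\to M'$ satisfying $\partial t=0$, i.e.\ an element of $Z^{1}_{ab}(R,M')=Z^{1}_{MacL}(R,M')$, and composition of homotopies corresponds to addition of these maps.

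The main obstacle, which is essentially the substantive content already carried out in the discussion immediately before the theorem, is verifying that the entire system of compatibility diagrams defining a braided $Ann$-functor of type $(p,q)$ between categories $(R,M,h,\beta)$ and $(R',M',h',\beta')$ collapses into the single equation $\partial g_F = k$ in $Z^{3}_{ab}(R,M')$. Most of this is the analogous calculation for ordinary $Ann$-functors from \cite{Q10}; the genuinely new ingredient is the braiding compatibility $F(c)\circ \Fm = \Fm\circ c'$, which contributes exactly the ``$\beta$-component'' $\nu(x,y)-\nu(y,x)=p^{\ast}\beta'(x,y)-q_{\ast}\beta(x,y)$ of equation~(12).
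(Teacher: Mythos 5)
Your proposal is correct and follows essentially the same route as the paper: the iff part via equation (12), bijection (i) by fixing one $g_F$ and observing that the set of structures is a torsor under $Z^2_{ab}(R,M')$ modulo homotopy $B^2_{ab}(R,M')$, and bijection (ii) from equation (13) with $F'=F$. The only cosmetic difference is that the paper obtains (i) by restricting the previously established bijection $\Phi$ for ordinary $Ann$-functors from \cite{Q10} to the braided case (using $B^2_{ab}=B^2_{MacL}$), whereas you parametrize the structures directly as $g_F+z$; the underlying computation is identical.
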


\begin {proof}
Let $F:\mathcal S\ri \mathcal S' $  be a braided $Ann$-functor of the type $(p, q)$. 
Then, from the equation (\ref{7}), we infer the obstruction of  $F$ vanishes in the group $H_{ab}^3(R, M')$.

Conversely, assume that the obstruction of the functor $F$  vanishes in the group $H_{ab}^3(R, M')$. Then, there exists a 2-cochain  $g=(\mu, \nu)$ such that $k=\partial g$, it means that 
$$p^{\ast}(h',\beta')-q_{\ast}(h,\beta)=\partial g.$$
Take $\Fa, \Fm$ be functor morphisms which are associated with the maps $\mu, \nu$, one can verify that  $(F,\Fa,\Fm)$ is a braided $Ann$-functor.

(i)  According to Theorem  4.5 \cite{Q10}, we have a bijection
$$\Phi: Hom^{Ann}_{(p,q)}[\mathcal S, \mathcal S']\ri H^2_{MacL}(R,M').$$

The bijection $\Phi$ is defined as follows (for detail, see Theorem 4.5 \cite{Q10}).

Since there exists an $Ann$-functor $(F, \Fa, \Fm): \mathcal S\rightarrow \mathcal S'$, we have 
\[p^{\ast}h'-q_{\ast}h=\partial_{MacL}g_F.\]
Let $g_F$ be fixed. Assume that $(K,\Ka, \Km): \mathcal S\rightarrow \mathcal S'$ is a braided $Ann$-functor of the type $(p,q)$. Then, 
\[p^{\ast}h'-q_{\ast}h=\partial_{MacL} g_{K}.\]
Hence,  $g_F-g_{K}$ is a 2-cocycle. The map $\Phi$ is given by:
\[\Phi([K])=[g_F-g_{K}].\]
Now, if $F, K$ are braided $Ann$-functors, then the above mentioned functions $g_F, g_K$ are 2-cochains of the commutative ring  $R$ with coefficients in the $R$-module $M$ and $g_F-g_K\in Z^2_{ab}(R,M')$. 
Besides,  since $Z^2_{ab}(R,M')\subset Z^2_{MacL}(R,M')$ and $B^2_{ab}(R,M')=B^2_{MacL}(R,M')$, we have  
$$H^2_{ab}(R,M')\subset H^2_{MacL}(R,M').$$
Call $\Phi'$ the restriction of the map $\Phi$ on the set $Hom^{BrAnn}_{(p,q)}[\mathcal S, \mathcal S'],$ we obtain an injection 
\[\Phi':Hom^{BrAnn}_{(p,q)}[\mathcal S, \mathcal S'] \rightarrow H^2_{ab}(R, M').\]

Now, assume that $g$ is an arbitrary 2-cocycle in $Z^2_{ab}(R,M')$. We have
\[\partial(g_F-g)=\partial g_F-\partial g=\partial g_F=p^{\ast}(h', \beta')-q_{\ast}(h,\beta).\]
Thus, there exists a braided $Ann$-functor $(K, \Ka, \Km): \mathcal S\rightarrow \mathcal S'$ of the type $(p,q)$, where the isomorphisms $\Ka, \Km$ are associated with the 2-cochain $g_F-g$. Clearly, 
$\Phi'([K])=[g]$, it means that $\Phi'$ is a surjection. Thus, $\Phi'$ is a bijection.

(ii) In the equation  (\ref{12}), with $F'=F$, we infer $\partial t=0$, i.e., $t\in Z^1_{ab}(R,M')$.
Thus, there exists a map
\begin{eqnarray*}
Aut(F)&\rightarrow& Z^1_{ab}(R,M'),\\
u&\mapsto& t.
\end{eqnarray*}
It is easy to see  that the above map is a bijection.
\end{proof}

\section{ Classification Theorems}\label{sec7}

Let $\bf BrAnn$ denote the category whose objects are braided $Ann$-categories $\mathcal A$, and whose morphirms are braided $Ann$-functors between braided $Ann$-categories.

Similar to the Classification Theorem for graded {\it Picard} categories (Theorem 3.12 [3]), we determine ${\bf H^{3}_{BrAnn}}$ to be a category whose objects are the triplets 
 $(R, M, [h,\beta])$, where $[h, \beta]\in H^3_{ab}(R,M)$ and $(R,M, h, \beta)$ is a braided $Ann$-category. A morphism 
$(p,q):(R,M,[h,\beta])\ri (R',M',[h',\beta'])$ 
of $\bf H^{3}_{BrAnn}$  is a pair  $(p, q)$ such that there exists  $g=(\mu,\nu):R^2\rightarrow M'$ and $(p,q,g)$ is a braided $Ann$-functor  $(R,M,h,\beta)\ri (R',M',h',\beta')$, i.e., $[p^\ast (h',\beta')]=[q_\ast (h,\beta)]\in H^3_{ab}(R,M')$. The composition law in $\bf H^{3}_{BrAnn}$ is defined by
$$(p', q')\circ (p,q)=(p'p, q'q).$$
Note that, {\it two braided $Ann$-functors $F, F':\mathcal A \rightarrow \mathcal A'$ are homotopic iff $F_i=F_i', i=0,1$ and $[g_F]=[g_{F'}]$}. Let
$$Hom_{(p,q)}^{BrAnn}[\mathcal A,\mathcal A']$$
denote the set of homotopy classes of braided $Ann$-functors  $\mathcal A\ri\mathcal A'$ which induce the same pair $(p,q)$, we assert the main theorem of this section.

\begin{thm}{\emph{(The classification theorem)}}
There exists a functor
\begin{eqnarray*}
 d:{\bf BrAnn} &\rightarrow& \bf H^{3}_{BrAnn},\\
\mathcal A     &\mapsto&       (\pi_0\mathcal A, \pi_1\mathcal A, [(h,\beta)_{\mathcal A}]),\\
F=(F, \Fa, \Fm)  &\mapsto& (F_0, F_1),
\end{eqnarray*}
with the following properties:
\begin {enumerate}
\item[\emph{(i)}]   $dF$ is an isomorphism iff $F$ is an  equivalence;
\item[\emph{(ii)}]  $d$ is a surjection over the set of objects;
\item[\emph{(iii)}] $d$ is  full but not faithful. Indeed, for any arrow $(p, q):d\mathcal A\rightarrow d\mathcal A'$ in $\bf H^{3}_{BrAnn}$, there is a bijection:
\begin{equation}\label{15}
d:Hom_{(p, q)}^{BrAnn}[\mathcal A, \mathcal A']\ri H^2_{ab}(\pi_0\mathcal A, \pi_1\mathcal A').\tag{15}
\end{equation}
\end {enumerate}
\end{thm}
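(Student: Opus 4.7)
The plan is to assemble the functor $d$ directly from the reduction machinery of Section 4 and the classification of braided $Ann$-functors of the type $(p,q)$ from Section 5, then read off each of the three properties as a consequence of results already proved.

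First I would define $d$ on objects. Given $\mathcal A$, choose a stick and form the reduced braided $Ann$-category $s\mathcal A$, which by Proposition 4.3 is of the type $(\pi_0\mathcal A,\pi_1\mathcal A)$ and carries a structure $(h,\beta)\in Z^{3}_{ab}(\pi_0\mathcal A,\pi_1\mathcal A)$. Independence of the cohomology class $[(h,\beta)]$ from the chosen stick is exactly Proposition 4.5(ii), so the assignment $\mathcal A\mapsto(\pi_0\mathcal A,\pi_1\mathcal A,[(h,\beta)_{\mathcal A}])$ is well defined. On morphisms, given a braided $Ann$-functor $F:\mathcal A\to\mathcal A'$, Proposition 5.3 produces $sF:s\mathcal A\to s\mathcal A'$ of the type $(F_0,F_1)$; the associated pair of maps $g_F$ satisfies relation (12), hence $[p^{\ast}(h',\beta')]=[q_{\ast}(h,\beta)]$ in $H^{3}_{ab}(\pi_0\mathcal A,\pi_1\mathcal A')$, so $(F_0,F_1)$ is indeed a morphism in $\mathbf{H^{3}_{BrAnn}}$. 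Functoriality $d(F'\circ F)=dF'\circ dF$ is immediate from the definitions of $F_0,F_1$ as the induced maps on $\pi_0,\pi_1$.

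For (i), if $F$ is an equivalence then $F_0,F_1$ are isomorphisms by inspection, while the converse is Proposition 5.3(i) combined with the fact that canonical braided $Ann$-equivalences $G,H$ are always equivalences. For (ii), given a triple $(R,M,[h,\beta])$ with $(h,\beta)\in Z^{3}_{ab}(R,M)$, Example 1 exhibits an explicit braided $Ann$-category of the type $(R,M)$ with structure $(h,\beta)$, whose image under $d$ is precisely the given triple, so $d$ is surjective on objects.

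The last and main point is (iii). Fix a morphism $(p,q):d\mathcal A\to d\mathcal A'$ in $\mathbf{H^{3}_{BrAnn}}$. By Proposition 4.5, I may replace $\mathcal A,\mathcal A'$ by their reductions $s\mathcal A,s\mathcal A'$, which are braided $Ann$-categories of the type $(R,M,h,\beta)$ and $(R',M',h',\beta')$. The assumption $[p^{\ast}(h',\beta')]=[q_{\ast}(h,\beta)]$ means the obstruction $\overline{k}$ in $H^{3}_{ab}(R,M')$ vanishes, so Theorem 5.3 applies: there is at least one braided $Ann$-functor of the type $(p,q)$, showing fullness, and the set of homotopy classes of such functors is in bijection with $H^{2}_{ab}(R,M')$ via the map $[K]\mapsto[g_F-g_K]$ recalled in the proof of that theorem. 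Transporting this bijection through the canonical equivalences $G,H$ of Proposition 4.5 yields the required bijection (15). Non-faithfulness then follows because $H^{2}_{ab}(R,M')$ is in general nonzero, for instance already in Example 1 by varying the coboundary term.

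The step requiring the most care is verifying that the bijection obtained from Theorem 5.3 at the level of reductions descends to homotopy classes at the level of $\mathcal A,\mathcal A'$, i.e.\ that two braided $Ann$-functors $F,F':\mathcal A\to\mathcal A'$ are homotopic precisely when $F_i=F_i'$ for $i=0,1$ and $[g_F]=[g_{F'}]$; this is where the characterisation of homotopies recorded just before the theorem statement, together with relation (13), has to be combined with the naturality of the canonical equivalences $G,H$ to show that the construction is independent of the chosen sticks.
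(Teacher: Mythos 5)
Your proposal is correct and follows essentially the same route as the paper: reduction to $s\mathcal A$ with stick-independence for well-definedness on objects, Proposition on induced functors $sF$ for morphisms and property (i), the type-$(R,M)$ construction for (ii), and for (iii) the composition of the bijection $Hom_{(p,q)}^{BrAnn}[\mathcal A,\mathcal A']\leftrightarrow Hom_{(p,q)}^{BrAnn}[s\mathcal A,s\mathcal A']$ (via the canonical equivalences $G,H'$) with the bijection onto $H^2_{ab}$ from the obstruction theorem. The step you flag as delicate --- that passage to reductions is a bijection on homotopy classes --- is exactly the map the paper isolates and verifies, so nothing essential is missing.
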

\begin{proof}
In the braided $Ann$-category $\mathcal A$, for any stick $(X_s,i_X)$, we can build a reduced braided  $Ann$-category
$(\pi_0\mathcal A,\pi_1\mathcal A,h,\beta)$.
When the choice of  stick is changed, the 3-cocycle $(h, \beta)$ is replaced by $(h',\beta')$ which is in the same cohomology class with $(h,\beta)$. So $\mathcal A$ defines uniquely an element $[(h,\beta)]\in H^3(\pi_0\mathcal A,\pi_1\mathcal A)$. It means that $d$ is a map over the set of objects.

For the braided $Ann$-functors
$$\mathcal A\stackrel{F}{\longrightarrow}\mathcal A'\stackrel{F'}{\longrightarrow}\mathcal A''$$
\noindent it is easy to see that $d(F'\circ F)=(dF')\circ (dF)$, and  ${d}({id_\mathcal A})=id_{d\mathcal A}$. 
Thus $d$ is a functor.

(i) Thanks to Proposition \ref{md22}.

(ii) If $(R,M,[h,\beta])$ is an object of  $\bf{H}^3_{BrAnn}$, then 
$\mathcal S=(R,M,h,\beta)$ is a braided $Ann$-category of the type $(R,M)$ and obviously,
$d\mathcal S=(R,M,[h,\beta])$.

(iii) Assume that $(p,q)$ is a morphism of
$Hom_{\bf{H}^3_{BrAnn}}(d\mathcal A, d\mathcal A')$. Then, there exists 
a function $g=(\mu, \nu)$, $\mu, \nu:(\pi_0\mathcal{A})^2\ri \pi_1\mathcal{A'}$ such that
$$p^\ast{(h,\beta)}_{\mathcal A'}=q_\ast{(h,\beta)}_\mathcal A+\partial g.$$
So
$$K=(p,q,g):(\pi_0\mathcal A, \pi_1\mathcal A,(h,\beta)_\mathcal A)\ri
(\pi_0\mathcal A', \pi_1\mathcal A',(h,\beta)_{\mathcal A'})$$ is a braided $Ann$-functor.
Then, according to Proposition \ref{md22}, the composition of braided $Ann$-functors $F=H'\circ K\circ
G:\mathcal A\ri\mathcal A'$ which induces a braided $Ann$-functor $dF.$ 
It is easy to see that 
$dF=(p,q)$. This proves that  the functor $d$ is full.

To show that (\ref{15}) is a bijection, we prove that the map
\begin{align}\label{16}
s:Hom_{(p,q)}^{BrAnn}[\mathcal A,\mathcal A']&\ {\ri}\ Hom_{(p,q)}^{BrAnn}[{s}\mathcal A,{s}\mathcal A']\tag{16},\\
{[F]}&\mapsto [sF]\notag
\end{align}
is a bijection.

Clearly, if  $F, F':\mathcal A\rightarrow \mathcal A'$ are homotopic, then the reduced braided $Ann$-functors  $sF, sF'$ are homotopic. Conversely, if $sF, sF'$ are homotopic, then the compositions $E=H'(sF)G$ and $E'=H'(sF')G$ are homotopic. The braided $Ann$-functors $E, E'$ are homotopic to, respectively,  $F, F'$. So, $F$ and $F'$ are homotopic. This proves that $s$ is injective.

Now, if $K=(p,q,g):s\mathcal A\ri s\mathcal A'$ is a braided $Ann$-functor, then the composition 
$$F=H'\circ K\circ
G:\mathcal A\ri\mathcal A'$$ is a braided $Ann$-functor such that
$s F=K$, i.e., 
$s$ is a surjection. Now, the bijection (\ref{15}) is just the composition of (\ref{16}) and (\ref{14}).
\end{proof}

According to Proposition \ref{md22},  we may simplify the problem of equivalence classification of braided $Ann$-categories by the classification
of braided $Ann$-categories which have the same (up to an isomorphism) first two invariants.

Let $R$ be a commutative ring with the unit, $M$ be an $R$-module (and regarded as a ring with the  null multiplication). We say that a braided $Ann$-category $\mathcal A$ has a {\it pre-stick of the type $(R,M)$} if there exists a pair of ring isomorphisms  $\epsilon=(p,q)$,
\[p: R\ri \pi_0\mathcal A,\qquad q: M\ri \pi_1\mathcal A\]
which is compatible with the module action, i.e.,
\[q(su)=p(s)q(u),\]
for $s\in R, u\in M$. The pair  $(p, q)$ is called  {\it a pre-stick of the type} $(R, M)$ respect to the braided $Ann$-category  $\mathcal A$.

A {\it morphism} between two braided $Ann$-categories $\mathcal A, \mathcal A'$, whose pre-sticks (respectively, $\epsilon=(p, q), \epsilon'=(p', q')$) are of the type $(R, M)$ is a braided $Ann$-functor $(F,\Fa, \Fm): \mathcal A\ri \mathcal A'$ such that the following diagrams
\[\scriptsize\begin{diagram} 
\node{\pi_0\mathcal A}\arrow[2]{e,t}{ F_0}
\node[2]{\pi_0\mathcal A'}
\node[3]{\pi_1\mathcal A}\arrow[2]{e,t}{F_1}
\node[2]{\pi_1\mathcal A'}\\
\node[2]{R}\arrow{nw,b}{p}\arrow{ne,b}{p'}\node{(17.1)}
\node[4]{M}\arrow{nw,b}{q}\arrow{ne,b}{q'}\node{(17.2)}
\end{diagram}\] 
commute, where $F_0, F_1$ are two homomorphisms induced by $(F,\Fa,\Fm)$.

Clearly, from the definition, we infer that  $F_0, F_1$ are isomorphisms and thus  $F$ is an equivalence.

Let
$${\bf BrAnn}[R, M]$$
denote the set of iso-classes of braided $Ann$-categories whose pre-sticks are of the type $(R, M)$.  Using Proposition \ref{md31}, we can prove the following theorem.

\begin{thm}\label{md34}
There exists a bijection 
\begin{eqnarray*}
\Gamma:{\bf BrAnn}[R, M]&\rightarrow& H^3_{ab}(R, M),\\
{[\mathcal A]}&\mapsto&  q^{-1}_\ast p^\ast [(h, \beta)_{\mathcal A}].
\end{eqnarray*}
\end{thm}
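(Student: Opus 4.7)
The plan is to verify well-definedness, surjectivity, and injectivity of $\Gamma$, using the reduction technique of Proposition \ref{md23} and Corollary \ref{md32} as the main tools.

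\emph{Well-definedness.} I would first observe that for any single braided $Ann$-category $\mathcal A$, any two sticks yield cohomologous structures by Proposition \ref{md31}(ii), so $[(h,\beta)_{\mathcal A}]$ is a well-defined element of $H^3_{ab}(\pi_0\mathcal A,\pi_1\mathcal A)$. Next, if $(F,\Fa,\Fm):\mathcal A\to\mathcal A'$ is a braided $Ann$-equivalence compatible with the pre-sticks (i.e.\ making (17.1), (17.2) commute), the induced $F_0,F_1$ are isomorphisms satisfying $F_0\circ p=p'$ and $F_1\circ q=q'$. By Proposition \ref{md22} and the classification functor $d$, $F$ transports the 3-cohomology class along $(F_0,F_1)$; combining with the compatibility relations gives $q^{-1}_\ast p^\ast[(h,\beta)_{\mathcal A}]=q'^{-1}_\ast p'^\ast[(h,\beta)_{\mathcal A'}]$ in $H^3_{ab}(R,M)$.

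\emph{Surjectivity.} For any $[(h,\beta)]\in H^3_{ab}(R,M)$, the braided $Ann$-category $\mathcal S=(R,M,h,\beta)$ built as in Example 1 has $\pi_0\mathcal S=R$, $\pi_1\mathcal S=M$, and the identity pair is a pre-stick of type $(R,M)$; hence $\Gamma([\mathcal S])=[(h,\beta)]$.

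\emph{Injectivity.} Assume $\Gamma([\mathcal A])=\Gamma([\mathcal A'])$. I would transport the reduced structures of $\mathcal A$ and $\mathcal A'$ along their pre-sticks to obtain two structures on $(R,M)$, giving braided $Ann$-categories $\mathcal S,\mathcal S'$ of type $(R,M)$ with cohomologous structures. By Corollary \ref{md32} there is a braided $Ann$-equivalence $\mathcal S\to\mathcal S'$ with identity underlying functor. On the other hand, Proposition \ref{md23} provides canonical braided $Ann$-equivalences $\mathcal A\simeq s\mathcal A$ and $\mathcal A'\simeq s\mathcal A'$, while the pre-sticks $(p,q)$ and $(p',q')$ themselves induce braided $Ann$-equivalences $s\mathcal A\simeq\mathcal S$ and $s\mathcal A'\simeq\mathcal S'$ of types $(p,q)$ and $(p',q')$ respectively (existence is guaranteed by Theorem \ref{s63} because the transported structures match on the nose, so the obstruction vanishes). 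Composing the chain $\mathcal A\simeq s\mathcal A\simeq\mathcal S\simeq\mathcal S'\simeq s\mathcal A'\simeq\mathcal A'$ yields a braided $Ann$-equivalence $\mathcal A\to\mathcal A'$.

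The delicate step, which I expect to be the main obstacle, is to check that this composite equivalence actually \emph{respects} the pre-sticks, i.e.\ satisfies diagrams (17.1), (17.2) so that $[\mathcal A]=[\mathcal A']$ in ${\bf BrAnn}[R,M]$. By construction its induced pair on $(\pi_0,\pi_1)$ is $(p'\circ p^{-1},q'\circ q^{-1})$, which is exactly what is needed for those triangles to close; nevertheless, verifying this carefully, together with the coherence of the intermediate choices of sticks and the chosen 2-cochain witnessing the cohomology equality, is the technical heart of the argument.
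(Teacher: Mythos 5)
Your proposal is correct and follows essentially the same route as the paper: well-definedness via Proposition \ref{md31} and the compatibility relation (12) for the induced functor $sF$, surjectivity via the category $(R,M,h,\beta)$ with the identity pre-stick, and injectivity via the chain $\mathcal A\to s\mathcal A\to\mathcal S\to\mathcal S'\to s\mathcal A'\to\mathcal A'$ through the type-$(R,M)$ models (the paper's $\mathcal I,\mathcal I'$ and functor $J$ of type $(id,id)$). Your explicit check that the composite induces $(p'\circ p^{-1},\,q'\circ q^{-1})$ and hence respects the pre-sticks is a detail the paper leaves implicit, but the argument is the same.
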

\begin{proof}
According to Proposition \ref{md31}, each braided $Ann$-category $\A$ defines uniquely an element $[(h,\beta)_{\A}]\in H^3_{ab}(\pi_0\A,\pi_1\A)$, and so it defines an element
$$\epsilon[(h, \beta)_{\mathcal A}]=q^{-1}_\ast p^\ast [(h, \beta)_{\mathcal A}]\in H^3_{ab}(R, M).$$

Now, if $F:\A\ri \A'$ is a morphism of two braided $Ann$-categories and $F$ is of the pre-stick of the type $(p,q)$, the induced braided $Ann$-functor $sF=(p, q, g_F)$, satisfies the equation (\ref{7}), and so
$$p^\ast [(h,\beta)_{\mathcal A'}]=q_\ast [(h,\beta)_{\mathcal A'}].$$
Then, it is easy to deduce that
$$\epsilon'[(h, \beta)_{\mathcal A'}]=\epsilon[(h, \beta)_{\mathcal A}].$$
This proves that  $\Gamma$ is a map. Moreover, it is a injective. Indeed, if
$\Gamma(\mathcal A)=\Gamma(\mathcal A')$,  then
$$\epsilon'(h, \beta)_{\mathcal A'}-\epsilon(h, \beta)_{\mathcal A}=\partial g.$$
 So, there exists a braided $Ann$-functor $J$ of the type $(id,id)$ from \linebreak $\mathcal I=(R, M, \epsilon(h, \beta)_{\mathcal A})$ to $\mathcal I'=(R, M,\epsilon'(h, \beta)_{\mathcal A'})$. The composition
$$\mathcal A\stackrel{G}{\longrightarrow}s\mathcal A\stackrel{\epsilon^{-1}}{\longrightarrow} \mathcal I\stackrel{J}{\longrightarrow} \mathcal I'\stackrel{\epsilon'}{\longrightarrow}s\mathcal A'\stackrel{H'}{\longrightarrow}\mathcal A',$$
proves that $[\mathcal A]=[\mathcal A']$, and  $\Gamma$ is injective. Obviously, $\Gamma$ is a surjection.
\end{proof}

In a connection with Harrison cohomology  \cite{Har}, we obtain the following result.

\begin{hq}\label{md35}  
There exists an injection from the Harrison cohomology group $H^{3}_{Har}(R, M)$ to the set of iso-classes of the braided $Ann$-categories whose pre-sticks are of the type $(R,M)$.
\end{hq}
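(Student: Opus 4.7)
The plan is to factor the desired injection as the composition
$$H^{3}_{Har}(R,M)\;\hookrightarrow\; H^{3}_{ab}(R,M)\;\stackrel{\Gamma^{-1}}{\longleftrightarrow}\;{\bf BrAnn}[R,M],$$
where the second bijection is supplied by Theorem \ref{md34}. So the work reduces to producing a natural injective homomorphism from the Harrison group to the abelian cohomology group defined in Subsection 2.2.

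The first step is to exhibit the comparison map. A normalized Harrison 3-cocycle on the commutative ring $R$ with values in the $R$-module $M$ is a Hochschild 3-cocycle $\alpha:R^{3}\to M$ which vanishes on all shuffles. To such an $\alpha$ I would associate the $6$-tuple $(\xi,\eta,\alpha,\lambda,\rho,\beta)$ obtained by setting $\xi=\eta=\lambda=\rho=0$ and $\beta=0$. I then need to verify that this $6$-tuple lies in $Z^{3}_{ab}(R,M)$. The relations S1--S5 and S7--S8 become trivial since $\xi,\eta,\lambda,\rho$ vanish; S6, S9--S11 reduce to the statement that $\alpha$ is additive in each argument, which is a consequence of the Harrison shuffle condition; S12 is precisely the Hochschild 3-cocycle identity for $\alpha$; and S20--S22 reduce, with $\beta=0$, to the symmetry/shuffle identities $\alpha(x,y,z)-\alpha(x,z,y)+\alpha(z,x,y)=0$ and $\alpha(x,y,z)-\alpha(y,x,z)+\alpha(y,z,x)=0$, again part of the Harrison-cocycle condition. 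This yields a well-defined group homomorphism $\iota\colon Z^{3}_{Har}(R,M)\to Z^{3}_{ab}(R,M)$.

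The second step is to check that $\iota$ descends to cohomology and remains injective. A Harrison 2-cochain is a symmetric normalized map $\nu:R^{2}\to M$, and a Harrison coboundary has the form $\alpha(x,y,z)=x\nu(y,z)-\nu(xy,z)+\nu(x,yz)-\nu(x,y)z$. Applying the formulas S13--S17 to the pair $(\mu,\nu)=(0,\nu)$ shows that $\iota(\alpha)=\partial(0,\nu)$ in $B^{3}_{MacL}(R,M)$, and since $\nu$ is symmetric one also has $\beta=\nu(x,y)-\nu(y,x)=0$, so $(\iota(\alpha),0)\in B^{3}_{ab}(R,M)$. Conversely, if $(\iota(\alpha),0)=\partial(\mu,\nu)$ is an abelian 3-coboundary, then S13 gives $\partial_{MacL}\mu=0$ at the level of additive 2-cochains, and the vanishing of the induced $\beta$-component forces $\nu(x,y)-\nu(y,x)=0$, i.e.\ $\nu$ is symmetric; using the freedom to shift by a 1-cochain one may arrange $\mu=0$, after which S15 gives exactly $\alpha=\partial_{Har}\nu$. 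This proves injectivity of the induced map on cohomology, and composing with $\Gamma^{-1}$ from Theorem \ref{md34} produces the required injection into ${\bf BrAnn}[R,M]$.

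The main obstacle I expect is the bookkeeping required at the chain level to align the Harrison normalizations (symmetric $\nu$, vanishing on shuffles) with the much richer normalizations of the 5-tuple description of $Z^{3}_{MacL}(R,M)$ used here, and in particular to justify the step ``shift by a 1-cochain to arrange $\mu=0$'' without disturbing either the vanishing of the $\xi,\eta,\lambda,\rho$ components or the symmetry of $\nu$. Once that normalization lemma is in place, the rest of the argument is a direct inspection of S1--S22 and S13--S17.
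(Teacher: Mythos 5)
Your construction is the same one the paper uses: the printed proof simply declares that a Harrison $3$-cocycle $\alpha$ together with $\xi=\eta=\beta=\lambda=\rho=0$ is a structure of a braided $Ann$-category of the type $(R,M)$, and asserts that $\alpha'-\alpha=\partial(g)$ iff the resulting categories are equivalent; your factorization through $H^{3}_{ab}(R,M)$ and Theorem \ref{md34} just makes that explicit. Your verification that the extended cochain lies in $Z^{3}_{ab}(R,M)$ is correct, up to two harmless misattributions: with everything but $\alpha$ set to zero, S6 and S22 become vacuous (they do not involve $\alpha$), the additivity of $\alpha$ in each slot is what S9--S11 demand (and it holds because Harrison cochains are by definition $\mathbb{Z}$-multilinear, not because of the shuffle condition), and the two shuffle identities are exactly S20--S21. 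The forward half of the second step (Harrison coboundary $\Rightarrow$ abelian coboundary via $(\mu,\nu)=(0,\nu)$ with $\nu$ symmetric and biadditive) is also fine.

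The injectivity half, however, has a genuine gap, and it is precisely the one you flagged. If $(\iota(\alpha),0)=\partial(\mu,\nu)$, then S13 and S14 only tell you that $\mu$ is a symmetric $2$-cocycle of the additive group $R_{+}$ with values in $M$; such cocycles represent classes in $\operatorname{Ext}^{1}_{\mathbb{Z}}(R_{+},M)$, which is nonzero in general (e.g.\ $R=M=\mathbb{Z}/p$), so $\mu$ need not have the form $t(y)-t(x+y)+t(x)$ and the step ``shift by a $1$-cochain to arrange $\mu=0$'' is not available. As long as $\mu$ survives, S16--S17 show that $\nu$ fails to be biadditive --- its additivity defect in the second slot is $\mu(xy,xz)-x\mu(y,z)$ --- so $\nu$ is not a Harrison $2$-cochain and S15 does not exhibit $\alpha$ as a Harrison coboundary. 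Closing this requires a further argument (for instance, showing that the multiplicative compatibilities S16--S17 force the class of $\mu$ in $\operatorname{Ext}^{1}_{\mathbb{Z}}(R_{+},M)$ to be absorbable, or comparing directly with the known relation between Hochschild and Mac Lane cohomology in degree $3$). You should know that the paper's own proof is a one-sentence assertion of the ``equivalent $\Rightarrow$ cohomologous'' direction and does not address this point either, so your attempt is no less complete than the original; but as written, neither argument establishes injectivity.
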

\begin{proof}
If $\alpha$ is a Harrison 3-cocycle of the commutative ring
$R$, with coefficients in $M$,  then it together with $\xi=0, \eta=0,  \beta=0, \lambda=0, \rho=0$ is a structure of the braided $Ann$-category $s(R,M)$. Moreover, if $\alpha'$ is another Harrison 3-cocycle, then $\alpha'-\alpha=\partial(g)$ iff braided $Ann$-categories $s(R,M,\alpha)$ and  
$s(R,M,\alpha')$ are equivalent.
\end{proof}


\begin{center}

\end{center}
\end{document}